\numberwithin{equation}{section}
\newcommand{\K}{\mathscr{K}}
\newcommand{\N}{\mathbb{N}}
\newcommand{\R}{\mathbb{R}}
\renewcommand{\S}{\mathscr{S}}
\newcommand{\W}{\mathscr{W}}
\newcommand{\loc}{{\rm loc}}
\newcommand{\dive}{\mbox{\normalfont div}}
\newcommand{\Hess}{{\mbox{\normalfont Hess}}}
\newcommand{\bequ}{\begin{equation}}
\newcommand{\equ}{\end{equation}}
\theoremstyle{definition}
\theoremstyle{plain}
\newtheorem{theorem}{Theorem}[section]
\newtheorem{proposition}[theorem]{Proposition}
\newtheorem{lemma}[theorem]{Lemma}
\renewcommand{\le}{\leqslant}
\renewcommand{\ge}{\geqslant}
\begin{document}

\title[Singular, degenerate, anisotropic equations]{Gradient bounds and 
rigidity results\\
for singular, degenerate, anisotropic \\
partial differential equations}

\author[Matteo Cozzi, Alberto Farina, Enrico Valdinoci]{
Matteo Cozzi${}^{(1,2)}$
\and
Alberto Farina${}^{(1,3)}$
\and 
Enrico Valdinoci${}^{(2,4,5)}$
}
\date{}

\maketitle

{\scriptsize \begin{center} (1) -- Laboratoire
Ami\'enois de Math\'ematique Fondamentale et Appliqu\'ee\\
UMR CNRS 7352, Universit\'e Picardie ``Jules Verne'' \\33 Rue St Leu, 
80039 Amiens (France).\\
\end{center}
\scriptsize \begin{center} (2) -- Dipartimento di Matematica
``Federigo Enriques''\\
Universit\`a
degli studi di Milano,\\
Via Saldini 50, I-20133 Milano (Italy).\\
\end{center}
\scriptsize \begin{center} (3) -- Institut ``Camille Jordan''\\
UMR CNRS 5208, Universit\'e
``Claude Bernard'' Lyon I\\
43 Boulevard du 11 novembre 1918, 69622 Villeurbanne cedex (France).\\
\end{center}
\scriptsize \begin{center} (4) --
Istituto di Matematica Applicata e Tecnologie Informatiche
``Enrico Magenes''\\
Consiglio Nazionale delle Ricerche\\
Via Ferrata 1, I-27100 Pavia (Italy).
\end{center}
\scriptsize \begin{center}(5) -- Weierstra{\ss}
Institut f\"ur Angewandte Analysis und Stochastik\\
Mohrenstra{\ss}e 39, D-10117 Berlin (Germany).
\end{center}
\bigskip
\begin{center}
E-mail addresses: matteo.cozzi@unimi.it,
alberto.farina@u-picardie.fr,
enrico@math.utexas.edu
\end{center}
}
\bigskip
\bigskip

{\sc Abstract:}
We consider the Wulff-type energy functional
$$\W_\Omega(u) := \int_\Omega B( H( \nabla u (x) ) ) - F(u(x)) \, dx,$$
where~$B$ is positive, monotone and convex, and $H$
is positive homogeneous of degree~$1$.
The critical points of this functional satisfy a possibly singular or
degenerate, quasilinear equation in an anisotropic medium.

We prove that the gradient of the solution is bounded
at any point by the potential $F(u)$ and we deduce several rigidity and
symmetry properties.
\bigskip
\bigskip

\section{Introduction and main results}

We consider here a variational problem in an anisotropic medium. The 
physical motivation we have in mind comes from some well-established 
models of surface energy, see for instance~\cite{T78, G06}
and references 
therein for a classical introduction to the topic.

Surface energy arises since the microscopic environment
of the interface of a medium is different from
the one in the bulk of the substance.
In many concrete cases, such as for the common
cooking salt, the different behavior depends
significantly on the space direction and
so these anisotropic surface energies have now become very
popular in metallurgy and crystallography, see e.g.~\cite{W01, D44, AC77}.
Applications to
crystal growth and thermodynamics
are discussed in~\cite{MBK77, C84, TCH92} and in~\cite{G93}, respectively.

Other applications of related anisotropic models occur in
noise-removal procedures in digital image processing,
crystalline mean curvature flows and crystalline fracture theory,
see e.g. \cite{NP99, BNP01a, BNP01b, EO04, OBGXY05}
and references therein.
See also~\cite{FM91, C04}
for anisotropic problems related to the 	
Willmore functional and~\cite{CS09, WX11} for
elliptic anisotropic
systems inspired by fluidodynamics. We defer the interested reader to Appendix~\ref{wulshaapp} for some deeper physical insights.

Of course, besides this surface energy, the medium may also be subject to exterior forces and the total energy functional is in this case the sum of an anisotropic surface energy plus a potential term. More precisely, the mathematical framework we work in
is inspired by the Wulff crystal construction (see pages~571--573 in~\cite{T78}) and it may be formally introduced as follows.

Given a domain~$\Omega \subseteq \R^n$, with~$n \ge 2$, consider the functional
\bequ \label{wulfun}
\W_\Omega(u) := \int_\Omega B( H( \nabla u (x) ) ) - F(u(x)) \, dx.
\equ
Here,~$B$ denotes a function of class~$C_\loc^{3, \beta}((0, +\infty))\cap C^1([0, +\infty))$, with~$\beta \in (0, 1)$, such that~$B(0) =  B'(0) = 0$ and
\bequ \label{Bsign}
B(t), B'(t), B''(t) > 0 \mbox{ for any } t \in (0, +\infty).
\equ
Also,~$H: \R^n \to \R$ is a positive homogeneous function of degree~$1$, of class~$C_\loc^{3, \beta}(\R^n \setminus \{ 0 \})$, with
%$H(0) = 0$ and
\bequ \label{Hsign}
H(\xi) > 0 \mbox{ for any } \xi \in \R^n \setminus \{ 0 \}.
\equ
Notice that, being~$H$ homogeneous and defined at the origin, it necessarily holds~$H(0) = 0$. 
Finally, take~$F \in C^{2,\beta}_\loc(\R)$ and assume that either~(A) or~(B) is 
satisfied, where:
\medskip

\begin{enumerate}[(A)]
\item There exist~$p > 1$,~$\kappa \in [0, 1)$ and positive~$\gamma, \Gamma$ such that, for any~$\xi \in \R^n \setminus \{ 0 \}$, $\zeta \in \R^n$,
$$
\left[ \Hess \,(B \circ H)(\xi) \right]_{i j} \zeta_i \zeta_j \ge \gamma {(\kappa + |\xi|)}^{p - 2} {|\zeta|}^2, \label{BHpell}
$$
and
$$
\sum_{i, j = 1}^n \left| \left[ \Hess \,(B \circ H)(\xi) \right]_{i j} \right| \le \Gamma {(\kappa + |\xi|)}^{p - 2}.
$$
%\begin{equation*} \label{B'0}
%B'(0) = 0.
%\end{equation*}
\item The composition~$B \circ H$ is of class~$C^{3,\beta}_\loc(\R^n)$ 
and for any~$K>0$
there exist a positive constant~$\gamma$ such that,
for any~$\xi, \zeta \in \R^n$, with~$|\xi|\le K$, we have
\begin{equation*}
\left[ \Hess \,(B \circ H)(\xi) \right]_{i j} \zeta_i \zeta_j \ge \gamma 
\,{|\zeta|}^2.
\end{equation*}
\end{enumerate}
\medskip

Here above and  
throughout the paper, the
summation convention for
repeated subscripts is used, unless differently specified.
Critical points of~$\W_\Omega$ weakly satisfy the Euler-Lagrange equation
\bequ \label{eleq}
\frac{\partial}{\partial x_i} 
\Big( B'( H(\nabla u) ) H_i(\nabla u) \Big) + F'(u) = 0,
\equ
where~$H_i(\xi) = \partial_{\xi_i} H(\xi)$. 

The model we consider is indeed very general and
it allows at the same time
an anisotropic dependence on the space variable
and a possible singularity
or degeneracy of the diffusion operator. For instance,
we can take into account the following
examples of~$B$:
\bequ \label{EXAM}
B(t) = \frac{{\left(\kappa^2 + t^2\right)}^{p/2} - \kappa^p}{p} \qquad
\mbox{ and } \qquad B(t) = \sqrt{1 + t^2} - 1,
\equ
with~$p > 1$, and $\kappa \ge 0$.
%\begin{eqnarray}\label{EXAM}
%\nonumber
%&& H(\xi)= c\, \left( \sum_{i=1}^n |\xi_i|^q\right)^{1/q}+
%\left( \sum_{i=1}^n |\xi_i|^2\right)^{1/2}\\
%{\mbox{and }} \ &&
%B(t)=\frac{(t+\kappa)^p-\kappa^p}{p}
%\ {\mbox{ or }} B(t)=(1+t^m)^{1/m}-1\,
%\end{eqnarray}
%with~$p,q>3$, $c,\kappa\ge0$ and~$m\ge2$.

Such choices are related to
the {\it anisotropic~$p$-Laplace equation}
\bequ\label{001}
\dive \left( H^{p - 1}(\nabla u) \nabla H(\nabla u) \right) + F'(u) = 0,
\equ
obtained by taking~$B(t) = t^p/p$, and the {\it anisotropic minimal surface equation}
\bequ\label{002}
\dive \left( \frac{H(\nabla u) \nabla H(\nabla u)}{\sqrt{1 + H^2(\nabla u)}} \right) + F'(u) = 0.
\equ
%Of course, these two models correspond to
%taking~$B(t) = t^p/p$ and~$B(t) = \sqrt{1 + t^2} - 1$.
In particular, when~$H(\xi)=|\xi|$, equations~\eqref{001}
and~\eqref{002} reduce respectively
to the classical $p$-Laplace
and minimal surface equations.

As an example of anisotropic~$H$ one may consider the function
\bequ \label{HA}
H(\xi) = \sqrt{\langle M \xi, \xi \rangle},
\equ
with~$M \in \mbox{\normalfont Mat}_n(\R)$ symmetric and positive definite.
We stress that the combination of such a~$H$ along with~$B$ as
in~\eqref{EXAM}, with~$\kappa > 0$ in the~$p$-Laplacian case, actually produces
an operator that satisfies hypothesis~(B). In Appendix~\ref{Hchar} we prove
that indeed this is the only possible choice for~$H$, in the framework of
assumption~(B).

We refer instead to Appendix~\ref{notanorm} for the construction of a rather general
anisotropic function~$H$ which is not necessarily a norm.

Given~$u: \R^n \to \R$, we define
$$
c_u := \sup \left\{ F(r) : r \in \left[ \inf_{\R^n} u, \sup_{\R^n} u \right] \right\}.
$$
The quantity~$c_u$ is an important potential gauge.
Indeed, the nonlinearity~$f$ defines the potential~$F$
uniquely up to an additive constant. An appropriate
choice of this constant makes the results that we are going
to present as sharp as possible: roughly speaking,
this gauge consists
in taking~$F(u)-c_u$ as effective potential
(notice that such potential is non-positive on the solution~$u$).
Furthermore we are able to identify explicitly the value
of the quantity~$c_u$, as showed in Theorem~\ref{liou2}.
 
Our main results are a pointwise estimate on the gradient of
the solution, from which we deduce some rigidity and symmetry properties
(in particular, we obtain one-dimensional Euclidean symmetry 
and Liouville-type results).

Thus, the first result we present is a pointwise bound
on the gradient in terms of the effective potential.
Notice that classical elliptic estimates provide bounds of the gradient
in either H\"older or Lebesgue norms, but do not give any pointwise
information in general.
In dimension~$1$, the pointwise estimate that we
present reduces to the classical Energy Conservation Law.

In higher dimension, estimates of this kind were given first by~\cite{M85}
for the semilinear equation
$$\Delta u+F'(u)=0$$ with~$F\le0$
(this case is comprised in our setting
by choosing~$H(\xi)=|\xi|$, $B(t)=t^2/2$). Then,~\cite{CGS94}
extended such estimates to the quasilinear case
$$\dive (\Phi'(|\nabla u|^2)\nabla u)+F'(u)=0$$
with~$F\le0$
(this is a particular case in our framework given by~$H(\xi)=|\xi|$, $B(t)=(1/2)\Phi(t^2)$).

Recently, some attention has been given to the case of anisotropic
media and the first pointwise estimate in this setting was given in~\cite{FV13}
for equations of the type
$$ \dive (H(\nabla u)\nabla H(\nabla u))+F'(u)=0$$
(again, this is a particular case for us by taking~$B(t)=t^2/2$). 

Our purpose
is to extend the previous results to the general case of anisotropic media
with possible nonlinearities, singularities and nondegeneracies
in the diffusion operator (indeed, the function~$H$ encodes
the anisotropy of the medium and the function~$B$ the possible
degeneracies of the operator). The precise statement of our
pointwise bound is the following:

\begin{theorem} \label{gradpest}
Assume that one of the following conditions is valid:
\begin{enumerate}[(i)]
\item Assumption~(A) holds and~$u \in L^\infty(\R^n) \cap W_\loc^{1, p}(\R^n)$ is a weak solution of~\eqref{eleq} in~$\R^n$;
\item Assumption~(B) holds and $u \in W^{1, \infty}(\R^n)$ weakly solves~\eqref{eleq} in~$\R^n$.
\end{enumerate}
Then, for any~$x \in \R^n$,
\bequ \label{gradpesteq}
B'(H(\nabla u(x))) H(\nabla u(x)) - B(H(\nabla u(x))) \le c_u - F(u(x)).
\equ
Moreover, if there exists~$x_0 \in \R^n$ such that
$$
\nabla u(x_0) \ne 0
$$
and
\bequ \label{gradpideq}
B'(H(\nabla u(x_0))) H(\nabla u(x_0)) - B(H(\nabla u(x_0))) = c_u - F(u(x_0)),
\equ
then
\bequ \label{gradpideq2}
B'(H(\nabla u)) H(\nabla u) - B(H(\nabla u)) = c_u - F(u).
\equ
on the whole connected component of~$\{ \nabla u \ne 0 \}$ containing~$x_0$.
\end{theorem}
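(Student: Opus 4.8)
The plan is to realize the inequality \eqref{gradpesteq} as the sign of a suitable \emph{$P$-function}. Set
\bequ
P := B'(H(\nabla u))\, H(\nabla u) - B(H(\nabla u)) + F(u) - c_u ,
\equ
so that \eqref{gradpesteq} is exactly $P \le 0$ on $\R^n$ and \eqref{gradpideq2} is exactly $P \equiv 0$ on the relevant component. First I would record the elementary facts about $P$ that make the whole scheme run. Writing $g(t) := B'(t)\,t - B(t)$, we have $g(0) = 0$ and $g'(t) = t\,B''(t) \ge 0$ on $(0,+\infty)$ by \eqref{Bsign} together with $B(0) = B'(0) = 0$; hence $g$ extends to a nonnegative, nondecreasing function on $[0,+\infty)$, continuous and vanishing at the origin. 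Since the regularity theory for \eqref{eleq} under Assumption~(A) or~(B) gives $u \in C^{1,\alpha}_\loc(\R^n)$, the quantity $H(\nabla u)$ is continuous, so $P \in C(\R^n)$, and on the closed set $\R^n \setminus U$, with $U := \{\nabla u \ne 0\}$, one has $P = F(u) - c_u \le 0$ by the definition of $c_u$ (and $H(0)=0$). On the open set $U$ the equation \eqref{eleq} is locally uniformly elliptic with $C^{2,\beta}$ coefficients, hence $u \in C^{3,\beta}_\loc(U)$ and \eqref{eleq} may be differentiated classically there.

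The core of the proof is a differential inequality for $P$ on $U$. Differentiating \eqref{eleq} with respect to $x_k$, contracting the resulting linear equations for the $u_k$ against the weights dictated by the operator, and summing over $k$, I would carry out the (long) anisotropic Bochner-type manipulation and arrive at a symmetric matrix field $\mathcal{A}$, locally uniformly elliptic and locally bounded on $U$, and a vector field $\mathfrak{b} \in L^\infty_\loc(U;\R^n)$, such that
\bequ
\dive\bigl(\mathcal{A}\,\nabla P\bigr) + \langle \mathfrak{b}, \nabla P\rangle \ge 0 \qquad \text{weakly in } U .
\equ
This is the point where Assumption~(A) or~(B) is decisive: the ellipticity bounds on $\Hess(B\circ H)$ force $\mathcal{A}$ to be nondegenerate on $U$, while the terms involving the second derivatives of $u$ reorganize — via a Cauchy--Schwarz (Newton-type) inequality on the level sets of $u$, controlling the mean-curvature-type term by the full tangential Hessian — into a quantity of the correct nonnegative sign, which is then discarded. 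This computation is the analogue, for a general~$B$ obeying \eqref{Bsign}, of the one performed in \cite{FV13} for $B(t)=t^2/2$ (and of \cite{M85,CGS94} in the isotropic case), the extra factors $B'$, $B''$, $B'''$ being kept under control precisely by \eqref{Bsign}; I expect this to be the most laborious step.

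With the subsolution property available, I would deduce $P \le 0$ globally. The function $P_+ := \max\{P,0\}$ is continuous on $\R^n$, vanishes on $\R^n \setminus U$, and is therefore a nonnegative weak subsolution of $\dive(\mathcal{A}\,\nabla\,\cdot\,) + \langle \mathfrak{b}, \nabla\,\cdot\,\rangle$ on all of $\R^n$ (the gluing across $\partial U$ being harmless since $P_+ \equiv 0$ there). Testing this inequality against $\eta^2 P_+$ with cutoffs $\eta$ equal to $1$ on $B_R$, supported in $B_{2R}$, and suitably (logarithmically) truncated, and absorbing, one reaches a Caccioppoli inequality whose right-hand side is controlled by means of the available a priori information — $u \in L^\infty(\R^n)$ in case~(i) and $u \in W^{1,\infty}(\R^n)$ in case~(ii), together with the local energy bounds for \eqref{eleq} — so that letting $R \to +\infty$ forces $\nabla P \equiv 0$ on $U$; continuity of $P$ and $P \le 0$ on $\partial U$ then give $P \le 0$ on $\R^n$, that is \eqref{gradpesteq}. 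The delicate point here is that, beyond $n=2$, mere boundedness of a subsolution on $\R^n$ does not suffice, so one must retain the good nonnegative term from the Bochner identity and exploit the precise growth of $\mathcal{A}$ and $\mathfrak{b}$ in $|\nabla u|$ when running the cutoff argument; I regard this globalization, alongside the Bochner computation, as the main obstacle.

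Finally, the rigidity statement is immediate. If \eqref{gradpideq} holds at some $x_0$ with $\nabla u(x_0) \ne 0$, then $P(x_0) = 0$, and since $P \le 0$ on $\R^n$ by the previous step, $P$ attains an interior maximum at $x_0 \in U$. As $P$ is a weak subsolution of the locally uniformly elliptic operator $\dive(\mathcal{A}\,\nabla\,\cdot\,) + \langle \mathfrak{b}, \nabla\,\cdot\,\rangle$ with locally bounded coefficients on $U$, the strong maximum principle (via the weak Harnack inequality) yields $P \equiv 0$ on the connected component of $U$ containing $x_0$, which is exactly \eqref{gradpideq2}.
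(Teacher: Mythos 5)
Your proposal follows the same $P$-function strategy as the paper: Proposition~\ref{SP:SP} establishes precisely the divergence-form subsolution inequality you describe for the same $P$, with the nonnegative remainder $\mathscr{R} = B'B'' H_{ij}H_{k\ell}u_{ik}u_{j\ell}$ playing the role of your ``Bochner term,'' and the paper's Section~\ref{S:4} then defers to the Caccioppoli/maximum-principle globalization of Theorem~1 in~\cite{FV13}, which is exactly the scheme you sketch (cutoff test functions against $\eta^2 P_+$, gluing across $\{\nabla u=0\}$ where $P\le 0$ is automatic, and the strong maximum principle on the component of $\{\nabla u\ne 0\}$ for the rigidity part). Your outline is correct and matches the intended route.
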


Now we state our main symmetry result, according to which
the equality in~\eqref{gradpideq} implies that the solution
only depends on one Euclidean variable (in particular,
the classical and anisotropic curvatures of the level sets
vanish identically):

\begin{theorem}\label{1D}
Let~$u$ be as in
Theorem~\ref{gradpest}. Suppose that
there exists~$x_0 \in \R^n$ such that~$\nabla u(x_0) \ne 0$
and~\eqref{gradpideq} holds true.

Then
there exist~$u_0:\R\rightarrow\R$ and~$\omega\in S^{n-1}$ such 
that~$u(x)=
u_0(\omega\cdot x)$ for any~$x$ in the connected component~${\mathscr{S}}$
of~$\{\nabla u\ne0\}$ containing~$x_0$, and
the level sets of $u$ in~${\mathscr{S}}$
are affine hyperplanes.
\end{theorem}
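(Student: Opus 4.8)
The plan is to build on the equality-propagation part of Theorem~\ref{gradpest}. Under our hypotheses it tells us that the function
$$
Q := B'(H(\nabla u))\,H(\nabla u) - B(H(\nabla u)) + F(u)
$$
is identically equal to $c_u$ on the connected component $\mathscr{S}$ of $\{\nabla u \ne 0\}$ containing $x_0$; moreover, inside $\{\nabla u\ne0\}$ the equation~\eqref{eleq} is locally uniformly elliptic by~(A) or~(B), so there $u$ is regular enough to differentiate twice. I would then revisit the differential inequality underlying the bound~\eqref{gradpesteq}: its proof produces, inside $\{\nabla u\ne0\}$, an inequality of the form $\mathscr{L}Q \ge \mathcal{R}$, where $\mathscr{L}$ is the linearization of the operator in~\eqref{eleq} and $\mathcal{R}\ge0$ is a remainder which is a sum of squares built out of $\Hess u$ and the derivatives of $H(\nabla u)$. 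Since $Q\equiv c_u$ on $\mathscr{S}$, necessarily $\mathcal{R}\equiv0$ there, and the whole argument reduces to reading off the geometric content of this vanishing.

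First, from $Q\equiv c_u$ on $\mathscr{S}$ we get $\nabla Q=0$ there; since $t\mapsto B'(t)t-B(t)$ has derivative $B''(t)\,t>0$ on $(0,+\infty)$ by~\eqref{Bsign}, this forces $\nabla\big(H(\nabla u)\big)$ to be a scalar multiple of $\nabla u$, so the tangential derivative of $H(\nabla u)$ along the level sets of $u$ vanishes on $\mathscr{S}$; in particular $H(\nabla u)$ is constant on each connected component of a level set contained in $\mathscr{S}$. Second --- and this is the core step --- the vanishing of $\mathcal{R}$ forces the second fundamental form of the level sets of $u$ to vanish identically on $\mathscr{S}$, so that every connected component of a level set inside $\mathscr{S}$ is an open subset of an affine hyperplane and hence has constant Euclidean unit normal, necessarily $\nu:=\nabla u/|\nabla u|$. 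Combining the two facts, on such a component $\nabla u=|\nabla u|\,\nu$ with $\nu$ fixed, so $H(\nabla u)=|\nabla u|\,H(\nu)$ with $H(\nu)>0$ fixed; hence $|\nabla u|$ is constant on the component too, i.e. the tangential derivative of $|\nabla u|$ along the level sets vanishes on all of $\mathscr{S}$.

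It remains to globalize over $\mathscr{S}$. Using only $|\nu|\equiv1$ one checks that each $\partial_j\nu$ is tangential to the level sets; its tangential components form (up to sign) the shape operator, which vanishes, while its normal component is $\nu_k\partial_k\nu_i=|\nabla u|^{-1}\big(\nu_k\partial_k\partial_i u-\nu_i\,\partial_\nu|\nabla u|\big)$, and this vanishes as well since $\nu_k\partial_k\partial_i u=\partial_i|\nabla u|=(\partial_\nu|\nabla u|)\,\nu_i$ by the previous step. Thus the full Jacobian $D\nu$ vanishes identically on the connected set $\mathscr{S}$, so $\nu\equiv\omega$ for a fixed $\omega\in S^{n-1}$, necessarily $\omega=\nabla u(x_0)/|\nabla u(x_0)|$. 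Consequently $\nabla u$ is everywhere parallel to $\omega$ on $\mathscr{S}$, hence $u$ is constant on each connected slice $\{x\in\mathscr{S}:\omega\cdot x=t\}$, and by connectedness of $\mathscr{S}$ these constants are described by a single profile $u_0:\R\to\R$ with $u(x)=u_0(\omega\cdot x)$ on $\mathscr{S}$; the level sets of $u$ in $\mathscr{S}$ are then the affine hyperplanes $\{\omega\cdot x=\mathrm{const}\}$ intersected with $\mathscr{S}$. I expect the main obstacle to be the core step above: because $\mathcal{R}$ is written through the anisotropic quantities $H_i(\nabla u)$ and $[\Hess(B\circ H)]_{ij}$, one has to verify carefully that $\mathcal{R}\equiv0$ still forces the \emph{Euclidean} curvatures of the level sets to vanish --- this is exactly where the ellipticity in~(A)/(B) and the $1$-homogeneity of $H$ are used in an essential way.
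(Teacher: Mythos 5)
Your overall strategy coincides with the paper's: reduce to the vanishing of the remainder $\mathscr{R}$ in the $P$-function identity, and read off the geometry. But the step you yourself flag as the ``main obstacle'' --- that $\mathscr{R}\equiv 0$ forces the second fundamental form of the level sets to vanish --- is precisely the one the paper spends its effort on, and you leave it unproved. The paper closes it with Proposition~\ref{corpos}: at a point where (after a rotation) $\nabla u$ points along $e_n$, one diagonalizes $\Hess H(\nabla u)$ via an orthogonal matrix $M$, observing (from Lemma~\ref{WXgen} and the ellipticity assumption (A) or (B)) that the eigenvalues satisfy $\lambda_1,\dots,\lambda_{n-1}>0$, $\lambda_n=0$, with the null eigenvector parallel to $\nabla u$. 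Then $\mathscr{R}= B'B''\sum_{p,r}\lambda_p\lambda_r(\vartheta_{pr})^2$ with $\vartheta_{pr}=M_{pi}M_{rm}u_{im}$, so $\mathscr{R}=0$ forces $\vartheta_{p'r'}=0$ for $p',r'<n$; inverting, and using that the $n$-th row of $M$ is parallel to $e_n$, yields $u_{i'j'}=0$ for all $i',j'<n$, i.e.\ the principal curvatures vanish. Without this linear-algebra argument your proof has a genuine hole: the quadratic form $c\mapsto H_{ij}H_{k\ell}c_{ik}c_{j\ell}$ has a nontrivial kernel, and one must identify it precisely (via the spectral decomposition and the $1$-homogeneity, which pin down $\ker\Hess H(\xi)=\R\xi$) before concluding anything about $\Hess u$.

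On the other hand, your globalization step --- extracting $\nabla\big(H(\nabla u)\big)\parallel\nabla u$ from $\nabla P=0$, concluding $|\nabla u|$ is constant on level-set components, and then checking $D\nu\equiv 0$ on $\mathscr S$ --- is a worthwhile explicit completion. The paper's written proof stops at $u_{i'j'}(0)=0$ (vanishing of the second fundamental form at the chosen point, hence at every point of $\mathscr S$) and declares the proof complete; it is silently relying on the reader to supply an argument of exactly the kind you give. As you implicitly recognize, curvature-flatness of the individual level sets alone does not give the one-dimensional conclusion (think of a function depending only on the angular variable, whose level sets are flat half-hyperplanes but which is not one-dimensional); the additional ingredient $\nabla P=0$ is what rules this out, and you use it correctly. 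So: supply Proposition~\ref{corpos} (or an equivalent argument identifying the kernel of the form $H_{ij}H_{k\ell}c_{ik}c_{j\ell}$) and your proposal becomes a complete, and in one respect more explicit, proof.
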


We observe that one-dimensional solutions $u(x)=u_0(\omega\cdot x)$
of~\eqref{eleq}
satisfy the ordinary differential equation
\bequ\label{ODE}
B''(H(\omega \dot u_0))\,H^2(\omega)\,\ddot u_0+F'(u_0)=0.\equ
Equivalently, \eqref{gradpideq2} reduces in this case to
the Energy Conservation Law
$$ b(H(\omega \dot u_0))=c_{u_0}-F(u_0),$$
where~$b(t):=B'(t)t-B(t)$.

Theorem~\ref{1D} was proved in the isotropic setting in~\cite{CGS94}
under the additional assumption that~$F\le0$,
and in the planar, anisotropic setting in~\cite{FV13}. Therefore
Theorem~\ref{1D} is new in the anisotropic setting
even for cases that are not singular or degenerate
(e.g. for $B(t)=t^2/2$).
We stress in particular that
the proof of this result is different from
the ones in~\cite{CGS94, FV13} since we exploit for
the first time the consequences of the vanishing of the~$P$-function
by taking into account explicitly an appropriate remainder term:
indeed, such $P$-function is not only a subsolution
of a suitable equation, but it is also a solution of
an equation with a term that has a sign and that vanishes
when $P$ is constant (see the forthcoming equation~\eqref{Pine}
for details).

Under some further (but natural) assumptions,
Theorem~\ref{1D} holds globally in the whole of the space,
as next results point out:

\begin{theorem}\label{1DAUX2}
Let~$u$ be as in
Theorem~\ref{gradpest} with condition~(ii) in force.
Assume 
that there exists~$x_0 \in \R^n$ such that~$\nabla u(x_0) \ne 0$
and~\eqref{gradpideq} holds true.

Then
there exist~$u_0:\R\rightarrow\R$ and~$\omega\in S^{n-1}$ such
that~$u(x)=
u_0(\omega\cdot x)$ for any~$x\in\R^n$.
%and the level sets of $u$
%are affine hyperplanes.
\end{theorem}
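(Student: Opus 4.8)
The plan is to upgrade the one--dimensional symmetry that Theorem~\ref{1D} already supplies on the connected component $\mathscr{S}$ of $\{ \nabla u \ne 0 \}$ containing $x_0$ to a statement on the whole space, by propagating the equality case of the gradient bound across the degeneracy set $\{ \nabla u = 0 \}$. The engine is the $P$--function $P := b(H(\nabla u)) + F(u) - c_u$, with $b(t) := B'(t)t - B(t)$: under hypothesis~(B) the diffusion operator is uniformly elliptic on every bounded set, so the differential (in)equality~\eqref{Pine} satisfied by $P$, and the attendant strong maximum principle, are available on all of $\R^n$ rather than only on $\{ \nabla u \ne 0 \}$ as in the proof of Theorem~\ref{1D}.

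First I would record the regularity that makes this rigorous. Since $B \circ H \in C^{3,\beta}_\loc(\R^n)$ and (B) forces uniform ellipticity on bounded sets, a bootstrap --- De Giorgi--Nash--Moser to reach $u \in C^{1,\alpha}_\loc(\R^n)$, then Schauder estimates applied twice to $[\Hess (B\circ H)(\nabla u)]_{ij}\, \partial_{ij} u = - F'(u)$ --- gives $u \in C^{3}_\loc(\R^n)$. Moreover, by Euler's identity $H_i(\xi)\xi_i = H(\xi)$ one has $b(H(\xi)) = \xi_i \partial_i (B\circ H)(\xi) - (B\circ H)(\xi)$, which is of class $C^{2,\beta}_\loc(\R^n)$, the origin included, with vanishing value and gradient and with Hessian $\Hess (B\circ H)(0)$ there; hence $P \in C^2_\loc(\R^n)$. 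Theorem~\ref{gradpest} then yields $P \le 0$ on $\R^n$ and $P(x_0) = 0$. Feeding $u \in C^3$ into the identity behind~\eqref{Pine} --- which is meaningful across $\{ \nabla u = 0 \}$ precisely because the operator is now nondegenerate --- exhibits $P$ as a subsolution of a uniformly elliptic linear equation on all of $\R^n$; since $P$ attains its maximum value $0$ at the interior point $x_0$, the strong maximum principle forces $P \equiv 0$ in $\R^n$.

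Next I would run the conclusion of Theorem~\ref{1D} component by component. Because $P \equiv 0$, the equality~\eqref{gradpideq2} holds at \emph{every} point of $\Omega^* := \{ \nabla u \ne 0 \}$, so Theorem~\ref{1D} applies on each connected component $\mathscr{S}_k$ of $\Omega^*$: there are $u_k : \R \to \R$ and $\omega^{(k)} \in S^{n-1}$ with $u(x) = u_k(\omega^{(k)} \cdot x)$ on $\mathscr{S}_k$ and with affine level sets there. As $u_k$ is strictly monotone on the (open, connected) image of $\omega^{(k)} \cdot x$ on $\mathscr{S}_k$, requiring each level set $\{ \omega^{(k)} \cdot x = \tau \} \cap \mathscr{S}_k$ to be a whole hyperplane forces $\mathscr{S}_k = \{ \alpha_k < \omega^{(k)} \cdot x < \beta_k \}$ to be a full slab (possibly a half--space or all of $\R^n$). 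Two disjoint slabs in $\R^n$ with $n \ge 2$ are necessarily parallel --- if $\omega^{(j)} \ne \pm \omega^{(k)}$, the map $x \mapsto (\omega^{(j)} \cdot x, \omega^{(k)} \cdot x)$ is onto $\R^2$ and the slabs meet --- so all the $\omega^{(k)}$ agree, up to sign, with a single $\omega \in S^{n-1}$, and after replacing $u_k(t)$ by $u_k(-t)$ when needed we may take $\omega^{(k)} = \omega$ for every $k$. Writing $\pi(x) := \omega \cdot x$ and $V := \bigcup_k (\alpha_k, \beta_k)$, this says $\Omega^* = \pi^{-1}(V)$ and $\{ \nabla u = 0 \} = \pi^{-1}(\R \setminus V)$.

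Finally I would glue the pieces. On $\Omega^*$ set $\tilde u_0(t) := u_k(t)$ for $t \in (\alpha_k,\beta_k)$; on the open set $\{ \nabla u = 0 \}$ the function $u$ is locally constant, and, thanks to the layered structure $\{ \nabla u = 0 \} = \pi^{-1}(\R \setminus V)$, each fibre $\pi^{-1}(t)$ either lies in a single connected ``gap'' of $\{ \nabla u = 0 \}$ on which $u$ is a fixed constant or is a limit of fibres contained in the slabs; in either case continuity of $u$ shows that $u$ is constant along $\pi^{-1}(t)$ and that $\tilde u_0$ extends continuously to all of $\R$. Hence $u(x) = \tilde u_0(\omega \cdot x)$ throughout $\R^n$, which is the claim. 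I expect the delicate point to be the second step: one must verify that the $P$--function identity~\eqref{Pine}, established within the proof of Theorem~\ref{1D} on the nondegeneracy set, genuinely persists across $\{ \nabla u = 0 \}$ under hypothesis~(B) --- this is exactly where uniform, as opposed to merely $p$--type, ellipticity is used, and it is the reason the global statement is confined to case~(ii).
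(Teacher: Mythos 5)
Your overall architecture is genuinely different from the paper's, and the step on which everything hinges is the one you yourself flag as ``delicate'': the claim that the differential inequality~\eqref{Pine} persists across $\{\nabla u=0\}$ as a \emph{uniformly elliptic} inequality for~$P$. As written, this claim is not justified and I believe it fails. Hypothesis~(B) makes the Hessian $a_{ij}=[\Hess\,(B\circ H)(\nabla u)]_{ij}$ bounded and uniformly elliptic, but the operator appearing in~\eqref{Pine} is built from $d_{ij}=a_{ij}/H$ and from $b_k$, both of which carry factors $H^{-1}(\nabla u)$ and $H^{-2}(\nabla u)$ that blow up on $\{\nabla u=0\}$ regardless of how nice $B$ is. Uniform ellipticity of $B\circ H$ does not cancel these inverse powers; they come from the chain rule applied to $b(H(\nabla u))$ and from the choice $d_{ij}=a_{ij}/H$ in the derivation, not from any degeneracy of the diffusion. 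Multiplying through by $H$ trades the blow-up for vanishing coefficients, so neither form hands you a nondegenerate linear operator on $P$ near the critical set. Consequently ``$P$ attains an interior maximum $\Rightarrow P\equiv 0$ on $\R^n$'' does not follow from the classical strong maximum principle; Theorem~\ref{gradpest} only gives $P\equiv 0$ on the single component $\S$ containing $x_0$, and that is all you are entitled to use without new input.

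The paper sidesteps this entirely. After the regularity and the identity $B''(0)>0$ (Lemma~\ref{der2B0}), it does \emph{not} attempt to globalize $P\equiv 0$. Instead it invokes Theorem~\ref{1D} once, on the component $\S$, to get $u=u_0(\omega\cdot x)$ on $\S$ with $u_0$ solving the ODE~\eqref{ODE} on some interval. Because $B''(0)>0$, the leading coefficient $B''(H(\omega\dot u_0))H^2(\omega)$ never degenerates, so the ODE rewrites as a first-order system with locally Lipschitz right-hand side and $u_0$ continues to a global solution $\hat u_0$ on $\R$. The one-dimensional extension $\hat u(x):=\hat u_0(\omega\cdot x)$ solves~\eqref{eleq} on all of $\R^n$ (here uniform ellipticity of case~(B) is used again), agrees with $u$ on the nonempty open set $\S$, and hence agrees with $u$ everywhere by the Unique Continuation Principle for second-order uniformly elliptic equations. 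That is the whole proof; no global $P$-function identity and no gluing of slabs is needed.

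Your later steps (each component of $\{\nabla u\ne 0\}$ is a slab, distinct slabs must be parallel, glue across the gaps by continuity along fibers) are essentially the paper's proof of Theorem~\ref{1DAUX} and are fine \emph{given} that $P\equiv 0$ on all of $\R^n$. So if you want to salvage your route, the honest statement is that you are reducing Theorem~\ref{1DAUX2} to Theorem~\ref{1DAUX} by showing that~\eqref{gradpideq2} holds globally --- and that reduction is exactly the missing, unproved step. To fill it you would need an argument that does not pass through a classical maximum principle for~\eqref{Pine} across the degeneracy set; the paper's ODE-plus-unique-continuation device is precisely a way to avoid confronting that singularity.
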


We observe that the assumptions of Theorem~\ref{1DAUX2}
are satisfied by many cases of interest, such as
the minimal surface and the regularized~$p$-Laplace equations
(e.g. with~$B$ as in~\eqref{EXAM} with~$\kappa>0$).
A global version of Theorem~\ref{1DAUX2} which encompasses
all the cases under consideration
is given by the following result:

\begin{theorem}\label{1DAUX}
Let~$u$ be as in
Theorem~\ref{gradpest} and assume that~\eqref{gradpideq2}
holds in the whole of~$\R^n$.

Then
there exist~$u_0:\R\rightarrow\R$ and~$\omega\in S^{n-1}$ such
that~$u(x)=
u_0(\omega\cdot x)$ for any~$x\in\R^n$.
%and the level sets of $u$
%are affine hyperplanes.
\end{theorem}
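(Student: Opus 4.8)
The plan is to bootstrap from the local one-dimensional symmetry already granted by Theorem~\ref{1D} to a global statement by arguing that, under the standing hypothesis~\eqref{gradpideq2} on all of~$\R^n$, the set~$\{\nabla u \ne 0\}$ cannot have a proper nonempty boundary. Concretely: pick any point~$x_0$ where~$\nabla u(x_0)\ne0$ (if no such point exists, $u$ is constant and the conclusion is trivial with any~$\omega$), let~$\mathscr{S}$ be its connected component in~$\{\nabla u\ne0\}$, and apply Theorem~\ref{1D} to get~$u_0:\R\to\R$ and~$\omega\in S^{n-1}$ with~$u(x)=u_0(\omega\cdot x)$ on~$\mathscr{S}$, the level sets being affine hyperplanes orthogonal to~$\omega$. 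The key point is then that~$\mathscr{S}$ must be a \emph{slab} $\{a < \omega\cdot x < b\}$ (possibly with $a=-\infty$ or $b=+\infty$): since level sets of $u$ restricted to $\mathscr{S}$ are the hyperplanes $\{\omega\cdot x = t\}$ and $\dot u_0(t)\ne0$ there, $\mathscr{S}$ is the preimage under $x\mapsto\omega\cdot x$ of the open set $\{t : \dot u_0(t)\ne 0\}$ intersected with the connected component — and connectedness forces this to be a single interval $(a,b)$.

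Next I would rule out that the slab is proper, i.e. show $a=-\infty$ and $b=+\infty$. Suppose, say, $b<+\infty$. Then on the hyperplane $\{\omega\cdot x=b\}$ we have $\nabla u\equiv 0$, so $u$ equals the constant $u_0(b)$ there, and by continuity of $u_0$ and $\dot u_0$ (with $\dot u_0(b)=0$) one gets, using the ODE~\eqref{ODE} together with the equivalent Energy Conservation form $b(H(\omega\dot u_0))=c_{u_0}-F(u_0)$ valid on $(a,b)$ by Theorem~\ref{gradpest} / the discussion after Theorem~\ref{1D}, that $F(u_0(b))=c_{u_0}$ and that $u_0$ extends past $b$ as the constant $u_0(b)$. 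Indeed $b(0)=B'(0)\cdot 0 - B(0)=0$, so the Energy Conservation Law at $t\to b^-$ yields $F(u_0(b))=c_{u_0}$; since $F(u_0(b))$ is the maximal value of $F$ on the range of $u_0$, the point $u_0(b)$ is a critical point of $F$ (here one uses $F\in C^{2,\beta}_\loc$ and that an interior maximum of $F\circ u_0$ forces $F'(u_0(b))=0$), hence the constant function $u_0(b)$ solves the ODE past $b$. One then checks, via the equality case~\eqref{gradpideq2} which holds globally by hypothesis, that $u$ cannot have $\nabla u\ne0$ anywhere in $\{\omega\cdot x>b\}$ either: any such point would lie in a different connected component $\mathscr{S}'$ of $\{\nabla u\ne0\}$ on which Theorem~\ref{1D} again applies, giving a direction $\omega'$; matching the traces and gradients across the separating hyperplane $\{\omega\cdot x=b\}$, where $\nabla u=0$ and $u=u_0(b)$ with $F'(u_0(b))=0$, forces (by uniqueness for the ODE~\eqref{ODE} with the degenerate initial data $\dot u_0=0$, $u_0=u_0(b)$ at $t=b$, and using strict convexity $B''>0$ of $B$ to make the ODE non-degenerate off the zero set) the solution to stay constant, contradicting $\nabla u\ne0$ in $\mathscr{S}'$. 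Hence $\{\nabla u\ne0\}=\{a<\omega\cdot x<b\}$ with $u$ constant on each side; but then redefining $u_0$ to be these constants outside $(a,b)$ gives $u(x)=u_0(\omega\cdot x)$ on all of $\R^n$.

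The cleanest packaging, which I would actually adopt, is to avoid the case analysis on $b$ by arguing directly: the function $u_0$, defined on the (possibly infinite) interval $\overline{(a,b)}\cap\R$, is monotone (since $\dot u_0$ has constant sign on $(a,b)$ by connectedness of $\mathscr{S}$), and extends by constants to all of $\R$; I claim the extended function $\tilde u_0(\omega\cdot x)$ agrees with $u$ everywhere. Both agree on the closure of $\mathscr{S}$. On a complementary region, $\nabla u=0$ along the whole boundary hyperplane and $F'=0$ at that level value, so $u$ solving~\eqref{eleq} with this boundary data — combined with the global equality~\eqref{gradpideq2}, which pins $b(H(\nabla u))=c_u-F(u)$ and hence forces $H(\nabla u)$, so $|\nabla u|$, to vanish wherever $F(u)=c_u$ — propagates the constant value. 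The main obstacle, and the step needing the most care, is exactly this propagation/uniqueness argument at the degenerate interface: because the operator in~\eqref{eleq} is singular or degenerate precisely where $\nabla u=0$, one cannot invoke standard ODE uniqueness naively, and must instead use the global equality~\eqref{gradpideq2} as a first integral (the function $b(t)=B'(t)t-B(t)$ is strictly increasing for $t>0$ since $b'(t)=B''(t)t>0$ by~\eqref{Bsign}, hence invertible) to recover $H(\nabla u)$ as a continuous function of $u$, reducing matters to a first-order, genuinely one-dimensional argument for which the vanishing of $F(u)-c_u$ at the interface gives the needed rigidity. Everything else — invoking Theorem~\ref{1D}, the slab structure, monotonicity of $u_0$ — is routine.
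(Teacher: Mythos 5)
There is a genuine gap, and the strategy also detours into an unnecessary and problematic step.

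\textbf{The gap.} Your entire construction rests on the assertion that each connected component~$\mathscr{S}$ of~$\{\nabla u\ne0\}$ is a full slab, i.e.\ that the level sets of~$u$ restricted to~$\mathscr{S}$ are \emph{entire} affine hyperplanes. You present this as a direct consequence of Theorem~\ref{1D}, but Theorem~\ref{1D} is proved via a local curvature computation and only yields that level sets are \emph{contained in} hyperplanes. To upgrade this, one must show that a level set passing through a point of~$\{\nabla u\ne0\}$ cannot leave~$\{\nabla u\ne0\}$: if~$x_\star\in\{\nabla u\ne0\}$ and~$u(x)=u(x_\star)$, then the global identity~\eqref{gradpideq2} gives $B'(H(\nabla u(x)))H(\nabla u(x))-B(H(\nabla u(x)))=c_u-F(u(x_\star))=B'(H(\nabla u(x_\star)))H(\nabla u(x_\star))-B(H(\nabla u(x_\star)))>0$ by Lemma~\ref{blem}, whence $\nabla u(x)\ne0$ too. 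This is the crux of the paper's proof and does not appear anywhere in your proposal; it is precisely where the hypothesis that~\eqref{gradpideq2} holds on all of~$\R^n$ is used. Only after this step does a closed-and-relatively-open argument show the level-set component equals the full hyperplane, giving the slab structure you take for granted.

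\textbf{The detour.} Most of your effort goes to proving that~$u$ is constant on $\{\omega\cdot x>b\}$ and that no further component of $\{\nabla u\ne0\}$ can exist beyond the slab. This is both unnecessary and, as you yourself flag, delicate to justify: the ODE~\eqref{ODE} is degenerate at $\dot u_0=0$ when, say, $B(t)=t^p/p$ with $p>2$, so standard uniqueness does not apply, and in fact without a growth condition such as~\eqref{Fgrowliou} multiple separated slabs \emph{can} occur (cf.\ the example after Theorem~\ref{liou}). But the theorem only claims one-dimensionality, not a single slab. Once the slab structure of each component is known, disjoint non-parallel slabs must intersect, so all components share a common normal~$\omega$; between the slabs $\nabla u\equiv0$ and $u$ is constant. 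The paper's clean finish (step~\eqref{4.5}) then simply checks, for each hyperplane $\{\omega\cdot x=t\}$, that $u$ is constant on it: either $\nabla u$ vanishes on the whole hyperplane and the Fundamental Theorem of Calculus applies, or some point on it has $\nabla u\ne0$ and the hyperplane is a full level set by the slab structure. No propagation or degenerate-ODE uniqueness argument is needed, so the ``main obstacle'' you identify is self-inflicted.
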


Differently from~\cite{CGS94} in which results similar
to Theorems~\ref{1DAUX2} and~\ref{1DAUX} were
obtained in the isotropic setting
with a different method, we do not need to
assume any sign assumption on~$F$.
The next is a Liouville-type result that shows that the solution is constant
if the effective potential and its derivative vanish at some point
(the isotropic case was dealt with in~\cite{CGS94, CFV12}).

\begin{theorem} \label{liou}
Let~$u$ be as in Theorem~\ref{gradpest}. If condition~(i) of Theorem~\ref{gradpest} is in force, with~$\kappa = 0$ and~$p > 2$, assume in addition that, given a value~$r \in \R$ such that~$F(r) = c_u$ and~$F'(r) = 0$, we have
\bequ \label{Fgrowliou}
|F'(\sigma)| = O({|\sigma - r|}^{p - 1}) \mbox{ as } \sigma \rightarrow r.
\equ
If there exists a point~$x_0 \in \R^n$ for which $F(u(x_0)) = c_u$ and~$F'(u(x_0)) = 0$, then~$u$ is constant.
\end{theorem}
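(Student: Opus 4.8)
The plan is to combine the pointwise gradient bound of Theorem~\ref{gradpest} with the rigidity alternative it provides, and then rule out the non-constant case by a direct ODE analysis. First I would argue by contradiction: suppose~$u$ is not constant, so that~$\{\nabla u\ne0\}$ is nonempty. The hypothesis at~$x_0$ gives~$F(u(x_0))=c_u$ and~$F'(u(x_0))=0$; I would first record that at such a point the right-hand side~$c_u-F(u(x_0))$ of~\eqref{gradpesteq} vanishes, while the left-hand side~$b(H(\nabla u(x_0)))=B'(H(\nabla u(x_0)))H(\nabla u(x_0))-B(H(\nabla u(x_0)))$ is nonnegative (by convexity of~$B$ and~$B(0)=0$, the function~$b(t)=B'(t)t-B(t)$ is nondecreasing and vanishes only at~$t=0$, since~$b'(t)=B''(t)t>0$ on~$(0,\infty)$). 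Hence~\eqref{gradpesteq} forces~$H(\nabla u(x_0))=0$, i.e.~$\nabla u(x_0)=0$ by~\eqref{Hsign}. So any ``bad'' point where the potential gauge is saturated with vanishing derivative is automatically a critical point of~$u$.

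Now take any point~$y$ in a connected component~$\S$ of~$\{\nabla u\ne0\}$; I want to reach a contradiction. The idea is to show that the equality case~\eqref{gradpideq2} propagates into~$\S$ and then invoke Theorem~\ref{1D} together with the ODE~\eqref{ODE}. The point~$x_0$ may lie on~$\partial\S$ rather than in~$\S$; by continuity of~$\nabla u$ (the standard~$C^{1,\alpha}$ regularity for such quasilinear operators, available under either (A) or (B)), one can find a sequence of points in~$\S$ approaching~$x_0$. Along such a sequence the left-hand side of~\eqref{gradpesteq} tends to~$b(0)=0$, so~$\sup_\S\big(b(H(\nabla u))-c_u+F(u)\big)$ is at least~$0$; but by~\eqref{gradpesteq} it is also~$\le0$, and combined with the identity principle of Theorem~\ref{gradpest} — once equality~\eqref{gradpideq} is attained at a single interior point of~$\S$ it holds on all of~$\S$ — I would deduce that~\eqref{gradpideq2} holds throughout~$\S$. (If no interior maximum is attained, one approximates: pick~$y_k\in\S$ with~$b(H(\nabla u(y_k)))-c_u+F(u(y_k))\to0$; the boundedness of~$u$ and the gradient bound give uniform~$C^{1}$ bounds, and a limiting/translation argument as in~\cite{FV13,CGS94} produces an entire solution saturating the bound everywhere, reducing to Theorem~\ref{1DAUX}.) Either way, Theorem~\ref{1D} (or~\ref{1DAUX}) applies and yields~$u(x)=u_0(\omega\cdot x)$ on~$\S$ with the Energy Conservation Law~$b(H(\omega\dot u_0))=c_{u_0}-F(u_0)$.

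The final step is the one-dimensional analysis, which I expect to be the main obstacle in the singular/degenerate case. Writing~$t=\omega\cdot x$, the profile~$u_0$ is monotone (its derivative does not vanish on the relevant interval) and satisfies the first-order relation~$b(H(\omega\dot u_0))=c_{u_0}-F(u_0)$ together with~\eqref{ODE}. Let~$r:=u(x_0)$ (the value with~$F(r)=c_u=c_{u_0}$, $F'(r)=0$). Since~$x_0$ is a critical point on the boundary of~$\S$, the profile~$u_0$ reaches the level~$r$ at (or in the limit at) an endpoint, where~$\dot u_0\to0$ and hence~$b(H(\omega\dot u_0))\to0=c_{u_0}-F(r)$ — consistent, so no immediate contradiction; instead I separate variables. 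From~$b\circ H(\omega\dot u_0)=c_{u_0}-F(u_0)$ one inverts to get~$\dot u_0=G(c_{u_0}-F(u_0))$ for a suitable continuous~$G$ with~$G(0)=0$, and the contradiction must come from a uniqueness statement for this autonomous ODE at the equilibrium level~$r$: the growth condition~\eqref{Fgrowliou}, namely~$|F'(\sigma)|=O(|\sigma-r|^{p-1})$, is exactly what forces~$c_{u_0}-F(\sigma)=O(|\sigma-r|^p)$, hence~$G(c_{u_0}-F(\sigma))=O(|\sigma-r|)$ near~$r$ (using~$b(t)\sim t^p$ for the~$p$-Laplace model and~$H$ comparable to a norm), which makes the separated integral~$\int \frac{du_0}{G(c_{u_0}-F(u_0))}$ diverge at~$r$ — i.e.~$u_0\equiv r$ is the only solution through the level~$r$, contradicting~$\nabla u\ne0$ on~$\S$. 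In the non-degenerate cases (condition (B), or (A) with~$\kappa>0$ or~$1<p\le2$) the same divergence holds without~\eqref{Fgrowliou}, since then~$b(t)\sim t^2$ near~$0$ and the integrand already behaves like~$|\sigma-r|^{-1}$. So in every case~$\S$ must be empty, $u$ is constant, and the proof is complete. The delicate point throughout is handling the interface~$\partial\S$ and the possibly non-attained supremum, for which I would lean on the translation/compactness machinery already used to prove Theorems~\ref{1D}--\ref{1DAUX}.
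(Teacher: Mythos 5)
Your approach diverges fundamentally from the paper's, and it has a genuine gap at the step where you invoke the rigidity theorems. The paper's proof is a direct Gronwall-type argument: fix any $x$, set $\varphi(t):=u(tx+(1-t)x_0)-r$ with $r:=u(x_0)$, bound $|\dot\varphi(t)|^{p^*}\le C\,H^{p^*}(\nabla u(x_t))\le C'\big[B'(H)H-B(H)\big]$ via Lemma~\ref{blem2}, then apply the pointwise bound~\eqref{gradpesteq} to get $|\dot\varphi(t)|^{p^*}\le C'\big[F(r)-F(u(x_t))\big]=-C'\int_r^{u(x_t)}F'(\sigma)\,d\sigma$. The growth condition (or, in the non-degenerate case, Lipschitz continuity of $F'$) yields $|F'(\sigma)|\le c\,|r-\sigma|^{p^*-1}$, hence $|\dot\varphi(t)|\le K|\varphi(t)|$, and the auxiliary function $\psi:=\varphi^2 e^{-Kt}$ is non-increasing with $\psi(0)=0$, forcing $\varphi\equiv0$. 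No rigidity or one-dimensional reduction is used at all.

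The gap in your argument: to apply Theorem~\ref{1D} or~\ref{1DAUX} you need the \emph{equality} case~\eqref{gradpideq} at an interior point of $\{\nabla u\ne0\}$. Your hypothesis only gives that the right-hand side of~\eqref{gradpesteq} vanishes at $x_0$, whence $\nabla u(x_0)=0$ — so $x_0$ is a critical point, not an interior point of any connected component $\S$. Approaching $x_0$ from inside $\S$ shows both sides of~\eqref{gradpesteq} tend to $0$, but that does not force the inequality to saturate at any interior point of $\S$; the supremum of $P$ over $\S$ may be negative and the equality-propagation mechanism never triggers. Your fallback (a translation/compactness argument to produce a limiting entire solution saturating the bound) would, even if carried out, yield information about the blow-up limit rather than about $u$ itself, and you would still need an additional argument to transfer the conclusion back. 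The separation-of-variables analysis you sketch at the end is in the right spirit — and in fact the quantitative ingredient $b(t)\gtrsim t^{p^*}$ is exactly Lemma~\ref{blem2}, which the paper also relies on — but the paper deploys it directly along segments emanating from $x_0$, bypassing the rigidity theorems entirely and thereby avoiding the attainment issue. You should adopt that direct Gronwall route instead of the reduction to Theorem~\ref{1D}.
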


Notice that condition~\eqref{Fgrowliou}
cannot be removed from Theorem~\ref{liou}, since, without such assumption,
one can construct smooth, non-constant, one-dimensional solutions:
see Proposition~7.2
in~\cite{FSV08} for an explicit, non-constant example in which~\eqref{Fgrowliou}
is not satisfied and
$$
F \left( \min_{\R^n} u \right)=F \left( \max_{\R^n} u \right)=c_u
\ {\mbox{ and }} \
F '\left( \min_{\R^n} u \right)=F '\left( \max_{\R^n} u \right)=0.$$
We also remark that, in principle, to obtain~$c_u$ one is expected
to know all the values of the solution~$u$ and to compute the potential out of them.
Next result shows in fact that this is not necessary, and that~$c_u$
may be computed once we know only the infimum and the supremum of the solution
(the isotropic case was dealt with in~\cite{FV10}):

\begin{theorem} \label{liou2}
Let~$u$ and~$F$ be as in Theorem~\ref{liou}. Then,
$$
c_u = \max \left\{ F \left( \inf_{\R^n} u \right), F \left( \sup_{\R^n} u \right) \right\}.
$$
Furthermore, if there exists~$y_0 \in \R^n$ such that~$F(u(y_0)) = c_u$, then 
$$ {\mbox{either~$u(y_0) = \displaystyle\inf_{\R^n} u$ or~$u(y_0) = 
\displaystyle\sup_{\R^n} u$.}}$$
\end{theorem}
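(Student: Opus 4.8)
The plan is to establish the non-trivial inequality $c_u \le \max\{F(\inf_{\R^n} u), F(\sup_{\R^n} u)\}$ by contradiction, reducing it to the Liouville statement of Theorem~\ref{liou}; the reverse inequality is immediate from the definition of $c_u$, since $\inf_{\R^n} u$ and $\sup_{\R^n} u$ both lie in the interval $[\inf_{\R^n} u, \sup_{\R^n} u]$ over which the supremum defining $c_u$ is taken. Preliminarily, observe that in both cases~(i) and~(ii) one has $u \in L^\infty(\R^n)$, so that $[\inf_{\R^n} u, \sup_{\R^n} u]$ is compact and, $F$ being continuous, the supremum $c_u$ is attained at some $r_* \in [\inf_{\R^n} u, \sup_{\R^n} u]$; moreover $u$ is continuous (Lipschitz in case~(ii), and locally of class $C^{1,\alpha}$ by standard elliptic regularity in case~(i)).

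Assume now, for contradiction, that $c_u > \max\{F(\inf_{\R^n} u), F(\sup_{\R^n} u)\}$. Then $F(r_*) = c_u$ forces $r_* \ne \inf_{\R^n} u$ and $r_* \ne \sup_{\R^n} u$, so $r_*$ belongs to the \emph{open} interval $(\inf_{\R^n} u, \sup_{\R^n} u)$; in particular $\inf_{\R^n} u < \sup_{\R^n} u$, i.e. $u$ is non-constant. Being an interior maximum point of $F$ on $[\inf_{\R^n} u, \sup_{\R^n} u]$, the value $r_*$ satisfies $F'(r_*) = 0$. Since $\R^n$ is connected and $u$ is continuous, the image $u(\R^n)$ is an interval with $\inf u(\R^n) = \inf_{\R^n} u$ and $\sup u(\R^n) = \sup_{\R^n} u$, hence it contains all of $(\inf_{\R^n} u, \sup_{\R^n} u)$, and in particular $r_*$; thus there is $x_0 \in \R^n$ with $u(x_0) = r_*$, so that $F(u(x_0)) = c_u$ and $F'(u(x_0)) = 0$. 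Theorem~\ref{liou} then yields that $u$ is constant, a contradiction. This proves $c_u = \max\{F(\inf_{\R^n} u), F(\sup_{\R^n} u)\}$.

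For the last assertion, suppose $F(u(y_0)) = c_u$ for some $y_0 \in \R^n$ and, arguing by contradiction, that $u(y_0) \in (\inf_{\R^n} u, \sup_{\R^n} u)$. Then $u(y_0)$ is an interior maximum point of $F$ on $[\inf_{\R^n} u, \sup_{\R^n} u]$, so $F'(u(y_0)) = 0$; applying Theorem~\ref{liou} with the point $y_0$ in place of $x_0$ forces $u$ to be constant, which is incompatible with $\inf_{\R^n} u < u(y_0) < \sup_{\R^n} u$. Hence $u(y_0) = \inf_{\R^n} u$ or $u(y_0) = \sup_{\R^n} u$.

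The only point deserving care is that Theorem~\ref{liou}, in the borderline instance of case~(i) with $\kappa = 0$ and $p > 2$, carries the side condition~\eqref{Fgrowliou} at the relevant value; but that hypothesis is postulated there for \emph{every} $r$ with $F(r) = c_u$ and $F'(r) = 0$, and $r_*$ (respectively $u(y_0)$) is precisely of this form, so no additional assumption is needed. Everything else in the argument is soft — compactness of $[\inf_{\R^n} u, \sup_{\R^n} u]$, continuity of $F$ and $u$, connectedness of $\R^n$, and the first-derivative test — so I expect no genuine obstacle beyond this bookkeeping: the proof is essentially a clean reduction to Theorem~\ref{liou}.
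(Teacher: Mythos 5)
Your proof is correct and follows essentially the same route as the paper's: argue by contradiction, use compactness and continuity to produce an interior value $r_*$ (respectively $u(y_0)$) at which $F$ attains $c_u$, observe that being an interior maximum gives $F'=0$, use continuity of $u$ and connectedness of $\R^n$ to find a point of $\R^n$ where $u$ takes that value, and invoke Theorem~\ref{liou} to force $u$ constant, a contradiction. The paper phrases the contradiction as ``there exists $r_0\in(\inf u,\sup u)$ with $F(r_0)=c_u$'' and disposes of both assertions at once, whereas you treat them separately; this is a cosmetic reorganization, not a different method, and your closing remark about the side hypothesis~\eqref{Fgrowliou} of Theorem~\ref{liou} being stated for \emph{every} relevant $r$ is exactly the right bookkeeping point.
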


The paper is organized as follows. First, in Section~\ref{S:1}
we collect some technical and ancillary results. The regularity of the solutions
is briefly tackled in Section~\ref{S:2}. The proof of Theorem~\ref{gradpest}
relies on a $P$-function argument that is discussed in Section~\ref{S:3}
(roughly speaking, one has to check that a suitable energy function
is a subsolution of a partial differential equation and to use
the Maximum Principle to obtain the desired bound).
The proofs of the main results are collected in
Sections~\ref{S:4}--\ref{S:8}. In Appendices~\ref{notanorm} and~\ref{Hchar}, respectively, we present
an example of function~$H$ which is not a norm and the proof of the fact that any~$H$ fulfilling assumption~(B) is of the form~\eqref{HA}. Finally, some physical interpretations of the so-called Wulff shape of~$H$ are briefly discussed in Appendix~\ref{wulshaapp}.

\section{Some preliminary results}\label{S:1}

The first part of
this section is mainly devoted to some elementary facts about positive homogeneous functions.
%Although the results are presented in a more general framework,
%they are clearly intended to be applied to the functions~$B$ and~$H$ introduced before. 
We mostly provide only the statements, referring to \cite{FV13} for the omitted proofs.

We recall that a function~$H: \R^n \setminus \{ 0 \} \to \R$ is said to be positive homogeneous of degree~$d \in \R$ if $H(t \xi) = t^d H(\xi)$, for any~$t > 0$ and~$\xi \in \R^n \setminus \{ 0 \}$.

\begin{lemma} \label{iter}
If~$H\in C^m(\R^n\setminus \{0\})$ is positive homogeneous of
degree~$d$ and~$\alpha\in \N^n$ with~$\alpha_1+\dots+\alpha_n=m$,
then~$\partial^\alpha H$ is positive homogeneous of degree~$d-m$.
\end{lemma}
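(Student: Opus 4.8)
The plan is to argue by induction on the total order~$m=\alpha_1+\dots+\alpha_n$ of differentiation, the case~$m=0$ being the hypothesis itself. For the inductive step it suffices to treat a single partial derivative: if~$G\in C^k(\R^n\setminus\{0\})$ is positive homogeneous of some degree~$e$ and~$k\ge 1$, I claim each~$\partial_i G$ is in~$C^{k-1}(\R^n\setminus\{0\})$ and positive homogeneous of degree~$e-1$. Granting this, one writes~$\partial^\alpha H=\partial_{i_1}\cdots\partial_{i_m}H$ (the order being immaterial since~$H\in C^m$, by Schwarz's theorem) and peels off one derivative at a time: $H$ has degree~$d$, so~$\partial_{i_m}H$ has degree~$d-1$ and class~$C^{m-1}$, then~$\partial_{i_{m-1}}\partial_{i_m}H$ has degree~$d-2$ and class~$C^{m-2}$, and after~$m$ steps~$\partial^\alpha H$ has degree~$d-m$, as desired.

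To prove the single-derivative claim, start from the homogeneity identity~$G(t\xi)=t^e\,G(\xi)$, valid for all~$t>0$ and~$\xi\in\R^n\setminus\{0\}$. Both sides are~$C^k$ in~$\xi$ on~$\R^n\setminus\{0\}$, so we may differentiate in~$\xi_i$. The left-hand side, via the chain rule, gives~$t\,(\partial_i G)(t\xi)$, while the right-hand side gives~$t^e\,(\partial_i G)(\xi)$. Hence~$(\partial_i G)(t\xi)=t^{e-1}(\partial_i G)(\xi)$ for all~$t>0$ and~$\xi\ne 0$, which is exactly positive homogeneity of degree~$e-1$; moreover~$\partial_i G\in C^{k-1}(\R^n\setminus\{0\})$ since~$G\in C^k(\R^n\setminus\{0\})$.

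There is essentially no obstacle here: the only points requiring a word of care are that every intermediate partial derivative of~$H$ retains enough regularity on~$\R^n\setminus\{0\}$ for the next differentiation to be legitimate (which is why one tracks the drop in the~$C^j$ index alongside the drop in homogeneity degree), and that the mixed partials defining~$\partial^\alpha H$ are independent of the order in which they are taken, guaranteed by~$H\in C^m$. Both are handled automatically by the bookkeeping in the induction above.
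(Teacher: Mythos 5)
Your proof is correct, and it is the standard argument: differentiate the homogeneity identity $G(t\xi)=t^eG(\xi)$ in $\xi_i$ and induct on the total order of differentiation. The paper itself omits the proof and defers to \cite{FV13}, Lemma 2 (noting only that the argument there carries over verbatim from integer to real degree $d$), and that reference proceeds exactly along the lines you describe, so you have essentially reproduced the intended proof.
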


Notice that the corresponding result proved in \cite{FV13}, Lemma 2, only deals with integer 
degrees. Nevertheless, the proof works the same way considering a real degree $d$.

Next, we establish the identities commonly used in the course of the 
main proofs.

\begin{lemma} \label{homain}
If~$H\in C^3(\R^n\setminus \{0\})$ is positive homogeneous of
degree~$1$, we have that
\begin{align}
\label{i} H_i(\xi) \xi_i &= H(\xi),\\
\label{ii} H_{ij}(\xi) \xi_i &= 0,\\
\label{iii} H_{ijk}(\xi) \xi_i &=  -H_{jk}(\xi).
\end{align}
\end{lemma}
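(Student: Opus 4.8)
The statement to prove is Lemma~\ref{homain}, the three Euler-type identities for a positive homogeneous function $H$ of degree $1$. These are classical consequences of differentiating Euler's identity, so the plan is straightforward.

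\medskip

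The plan is to start from Euler's identity for $H$ itself and differentiate it successively. Since $H$ is positive homogeneous of degree $1$, Euler's theorem gives $H_i(\xi)\xi_i = H(\xi)$, which is exactly \eqref{i}; alternatively one differentiates $H(t\xi)=tH(\xi)$ in $t$ and sets $t=1$. For \eqref{ii}, I would differentiate the identity \eqref{i} with respect to $\xi_j$: the left-hand side yields $H_{ij}(\xi)\xi_i + H_i(\xi)\delta_{ij} = H_{ij}(\xi)\xi_i + H_j(\xi)$, while the right-hand side gives $H_j(\xi)$; cancelling $H_j(\xi)$ leaves $H_{ij}(\xi)\xi_i = 0$. (One may equally invoke Lemma~\ref{iter}: $H_j$ is positive homogeneous of degree $0$, so Euler's identity applied to $H_j$ gives $H_{ij}(\xi)\xi_i = 0$ directly.) For \eqref{iii}, differentiate \eqref{ii} with respect to $\xi_k$: the left-hand side becomes $H_{ijk}(\xi)\xi_i + H_{ij}(\xi)\delta_{ik} = H_{ijk}(\xi)\xi_i + H_{kj}(\xi)$, and since the right-hand side of \eqref{ii} is zero, we conclude $H_{ijk}(\xi)\xi_i = -H_{jk}(\xi)$, using the symmetry $H_{kj}=H_{jk}$ valid because $H\in C^3$.

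\medskip

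All the differentiations are legitimate on $\R^n\setminus\{0\}$ because $H\in C^3(\R^n\setminus\{0\})$, so the mixed partials exist, are continuous, and are symmetric there; the summation convention is in force throughout. There is no real obstacle here: the only point requiring minimal care is bookkeeping of the Kronecker deltas produced when differentiating the factor $\xi_i$, and the fact that one must work away from the origin where $H$ need not be differentiable. The argument is entirely elementary and self-contained, relying at most on Lemma~\ref{iter} as an optional shortcut.
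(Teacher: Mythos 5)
Your proof is correct and is the standard argument: the paper itself omits the proof of this lemma (referring to \cite{FV13} for such elementary facts), and the route you take — Euler's identity for degree-$1$ homogeneity to get \eqref{i}, then two successive differentiations with the Kronecker-delta bookkeeping to obtain \eqref{ii} and \eqref{iii} — is precisely the expected one. The alternative you mention, invoking Lemma~\ref{iter} to see that $H_j$ and $H_{jk}$ are homogeneous of degrees $0$ and $-1$ respectively and then applying Euler's identity to each, is equally valid and equivalent.
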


Now, we justify the smoothness of~$H$ needed to write \eqref{eleq}
and to use the regularity theory:

\begin{lemma} \label{derBH0}
Let~$H\in C^1(\R^n\setminus\{0\})$ be a positive homogeneous function of degree $d$ admitting non-negative values and~$B \in C^1([0, +\infty))$, with~$B(0) = 0$. Assume that either $d > 1$ or $d = 1$ and $B'(0) = 0$. Then~$H$ can be extended by setting~$H(0):=0$ to a continuous function, such that~$B \circ H \in C^1(\R^n)$ and
$$
\partial_i (B \circ H)(0) = 0 = \lim_{x \rightarrow 0} B'(H(x)) H_i(x).
$$
\end{lemma}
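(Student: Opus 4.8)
The plan is to reduce everything to a scalar estimate on $B\circ H$ near the origin, using the homogeneity of $H$ to control its growth and that of its first derivatives. First I would extend $H$ continuously to the origin by $H(0):=0$; this is forced, since $H(t\xi)=t^d H(\xi)$ with $d\ge 1$ and $H$ bounded on the unit sphere (by continuity on $\R^n\setminus\{0\}$, which is compact after restricting to $S^{n-1}$) gives $|H(\xi)|\le C|\xi|^d\to 0$ as $\xi\to 0$. Next I would observe that $H_i$ is positive homogeneous of degree $d-1$ by Lemma~\ref{iter}, hence $|H_i(\xi)|\le C|\xi|^{d-1}$ on a punctured neighbourhood of $0$; in the borderline case $d=1$ this only yields boundedness of $H_i$, which is exactly why the extra hypothesis $B'(0)=0$ is needed there.

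The core of the argument is then to show $B\circ H$ is differentiable at $0$ with vanishing gradient, and that $\nabla(B\circ H)$ is continuous at $0$. For differentiability at $0$: write, for $\xi\ne0$, $\bigl| (B\circ H)(\xi)-(B\circ H)(0)\bigr| = |B(H(\xi))|$, and since $B\in C^1([0,+\infty))$ with $B(0)=0$ we have $B(s)=o(s)$ if additionally $B'(0)=0$, or at worst $B(s)=O(s)$ in general (Lipschitz near $0$ since $B'$ is continuous). Combining with $|H(\xi)|\le C|\xi|^d$: in case $d>1$ we get $|B(H(\xi))|\le C|\xi|^d = o(|\xi|)$; in case $d=1$, $B'(0)=0$ forces $B(s)=o(s)$, so $|B(H(\xi))|\le o(|\xi|)$ again. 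Either way $(B\circ H)(\xi)=o(|\xi|)$, giving $\partial_i(B\circ H)(0)=0$. For continuity of the partial derivatives away from $0$ approaching $0$: at $\xi\ne0$, the chain rule gives $\partial_i(B\circ H)(\xi)=B'(H(\xi))H_i(\xi)$; I would bound $|B'(H(\xi))H_i(\xi)|\le \bigl(\sup_{[0,\delta]}|B'|\bigr)\,|H_i(\xi)|$ when $|\xi|$ is small (using $H(\xi)\in[0,\delta]$ there). In case $d>1$, $|H_i(\xi)|\le C|\xi|^{d-1}\to 0$, and we are done. In case $d=1$, $|H_i(\xi)|$ is merely bounded, but $H(\xi)\to 0$, so $B'(H(\xi))\to B'(0)=0$ by continuity of $B'$, and the product still tends to $0$. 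This establishes $\lim_{\xi\to 0}B'(H(\xi))H_i(\xi)=0=\partial_i(B\circ H)(0)$, and by the standard theorem (a function differentiable at a point with partials continuous on a punctured neighbourhood and convergent to the value at that point is $C^1$ there) we conclude $B\circ H\in C^1(\R^n)$.

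The only delicate point, and the place where the hypotheses are genuinely used, is the borderline degree $d=1$: there the naive homogeneity bound on $H_i$ degenerates from a vanishing estimate to mere boundedness, so one cannot close the argument by size alone and must instead exploit $B'(0)=0$ together with the continuity of $B'$ and the fact that $H$ itself still vanishes at $0$. I do not expect any serious obstacle beyond organizing these two cases cleanly; the computation is routine once the $o(\cdot)$ and $O(\cdot)$ bookkeeping is set up, and all the homogeneity facts needed are already recorded in Lemmas~\ref{iter} and~\ref{homain}.
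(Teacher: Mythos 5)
Your argument is correct and is essentially the paper's own: both proofs extend $H$ by homogeneity, establish $\partial_i(B\circ H)(0)=0$ directly from the difference quotient (you via $o(|\xi|)$ bookkeeping, the paper via the substitution $s=|t|^dH(\pm e_i)$, which amounts to the same estimate), and then show $\lim_{x\to0}B'(H(x))H_i(x)=0$ by splitting into the cases $d>1$ (where $|H_i(x)|\lesssim|x|^{d-1}\to0$) and $d=1$ (where $B'(H(x))\to B'(0)=0$). Your identification of exactly where the hypothesis $B'(0)=0$ enters at the borderline degree $d=1$ matches the paper's computation precisely.
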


\begin{proof} 
Setting~$H(0) := 0$ clearly transforms~$H$ into a continuous function on 
the whole 
of~$\R^n$, since~$|H(\xi)|\le |\xi|^d \sup_{{\rm S}^{n-1}} |H|$, for any~$\xi \in \R^n \setminus \{ 0 \}$. Moreover,~$B \circ H \in
C^1(\R^n\setminus\{0\})$, and
\begin{equation*}
\begin{split}
\partial_i (B \circ H)(0) & = \lim_{t\rightarrow 0} \frac{B(H(te_i))}{t} = \lim_{t \rightarrow 0^\pm} \frac{B(H(\pm |t| e_i))}{t} \\
& = \lim_{t \rightarrow 0^\pm} \frac{B(|t|^d H(\pm e_i))}{t} = \pm {H(\pm e_i)}^{\frac{1}{d}} \lim_{s \rightarrow 0^+} \frac{B(s)}{s} s^{\frac{d - 1}{d}} \\
& = \pm {H(\pm e_i)}^{\frac{1}{d}} B'(0) \lim_{s \rightarrow 0^+} s^{\frac{d - 1}{d}} = 0.
\end{split}
\end{equation*}
On the other hand, by Lemma~\ref{iter}, $H_i(x)=|x|^{d-1} H_i\left( x / |x| \right)$ for any~$x\in\R^n\setminus\{0\}$, and so
\begin{equation*}
\begin{split}
\lim_{x \rightarrow 0} |B'(H(x)) H_i(x)| & \le \sup_{S^{n - 1}} |H_i| \lim_{x \rightarrow 0} |x|^{d-1} \left| B' \left( |x|^d H \left( \frac{x}{|x|} \right) \right) \right| \\
& = |B'(0)| \sup_{S^{n - 1}} |H_i| \lim_{x \rightarrow 0} |x|^{d-1} = 0,
\end{split}
\end{equation*}
as desired.
\end{proof}

Then,
we have the following characterization of the positive definiteness of the composition~$B \circ H$.

\begin{lemma} \label{WXgen}
Let~$B \in C^2((0, +\infty))$ be a function satisfying~\eqref{Bsign} and~$H \in C^2(\R^n \setminus \{ 0 \})$ be positive homogeneous of degree~$1$ satisfying~\eqref{Hsign}. Then, the following two statements are equivalent:
\begin{enumerate}[(i)]
\item $\Hess \,(B \circ H)$ is positive definite in~$\R^n \setminus \{ 0 \}$;
\item The restriction of~$\Hess \, (H) (\xi)$ to~$\xi^\perp$ is a positive definite endomorphism~$\xi^\perp \to \xi^\perp$, for all~$\xi \in S^{n - 1}$.
\end{enumerate}
\end{lemma}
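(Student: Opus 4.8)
The plan is to reduce everything to the pointwise identity obtained by differentiating $\partial_i(B\circ H)=B'(H)H_i$ once more, namely
$$
[\Hess\,(B\circ H)(\xi)]_{ij}=B''(H(\xi))\,H_i(\xi)H_j(\xi)+B'(H(\xi))\,H_{ij}(\xi),\qquad \xi\in\R^n\setminus\{0\},
$$
so that, testing against an arbitrary $\zeta\in\R^n$,
$$
[\Hess\,(B\circ H)(\xi)]_{ij}\zeta_i\zeta_j=B''(H(\xi))\,(\nabla H(\xi)\cdot\zeta)^2+B'(H(\xi))\,H_{ij}(\xi)\zeta_i\zeta_j.
$$
Two structural facts from Lemma~\ref{homain} drive the argument. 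First, \eqref{ii} gives $H_{ij}(\xi)\xi_i=0$, so $\xi\in\ker\Hess H(\xi)$ and, by symmetry of the Hessian, $\Hess H(\xi)$ maps $\xi^\perp$ into itself; consequently $H_{ij}(\xi)\zeta_i\zeta_j=H_{ij}(\xi)w_iw_j$ where $w:=P_{\xi^\perp}\zeta$ denotes the orthogonal projection of $\zeta$ onto $\xi^\perp$, which in particular shows that the restriction in statement~(ii) is well defined. Second, \eqref{i} together with \eqref{Hsign} yields $\nabla H(\xi)\cdot\xi=H(\xi)>0$, so $\nabla H(\xi)$ is transverse to $\xi^\perp$: the hyperplane $(\nabla H(\xi))^\perp$ meets $\R\xi$ only at the origin, and hence $P_{\xi^\perp}$ restricts to a linear isomorphism of $(\nabla H(\xi))^\perp$ onto $\xi^\perp$.

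For the implication (ii)$\Rightarrow$(i) I would fix $\xi\neq0$, set $\hat\xi=\xi/|\xi|$, and use Lemma~\ref{iter} to write $\Hess H(\xi)=|\xi|^{-1}\Hess H(\hat\xi)$, so that the positive definiteness of $\Hess H(\hat\xi)|_{\hat\xi^\perp}$ assumed in~(ii) passes to $\Hess H(\xi)|_{\xi^\perp}$. Since $H(\xi)>0$, both $B'(H(\xi))$ and $B''(H(\xi))$ are positive by \eqref{Bsign}, so in the displayed identity both summands are nonnegative for every $\zeta$. If $\zeta\neq0$ and $w=P_{\xi^\perp}\zeta\neq0$, the second summand is strictly positive; if instead $w=0$, then $\zeta=c\hat\xi$ with $c\neq0$ and the first summand equals $B''(H(\xi))\,c^2\,(H(\xi)/|\xi|)^2>0$. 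Either way $\Hess\,(B\circ H)(\xi)$ is positive definite.

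For the converse (i)$\Rightarrow$(ii) I would fix $\xi\in S^{n-1}$ and an arbitrary nonzero $w\in\xi^\perp$. By the transversality remark above there is a unique $\zeta\in(\nabla H(\xi))^\perp$ with $P_{\xi^\perp}\zeta=w$, and necessarily $\zeta\neq0$. Inserting this $\zeta$ into the identity annihilates the rank-one term, since $\nabla H(\xi)\cdot\zeta=0$, leaving
$$
0<[\Hess\,(B\circ H)(\xi)]_{ij}\zeta_i\zeta_j=B'(H(\xi))\,H_{ij}(\xi)w_iw_j,
$$
and dividing by $B'(H(\xi))>0$ gives $H_{ij}(\xi)w_iw_j>0$, which is (ii).

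I expect the only genuinely delicate point to be the choice of test vector in the last step: one must pick $\zeta$ inside $(\nabla H(\xi))^\perp$ precisely to kill the $B''$-term, and then argue that such $\zeta$ can still be made to project onto a prescribed direction of $\xi^\perp$ and be nonzero — both facts resting on the Euler identity $\nabla H(\xi)\cdot\xi=H(\xi)\neq0$. Everything else is routine homogeneity bookkeeping combined with the sign conditions \eqref{Bsign} and \eqref{Hsign}.
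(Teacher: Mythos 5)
Your proof is correct and follows essentially the same route as the paper's: both hinge on the identity $[\Hess\,(B\circ H)]_{ij}=B''H_iH_j+B'H_{ij}$, the transversality $\nabla H(\xi)\cdot\xi=H(\xi)>0$ to split along $(\nabla H(\xi))^\perp\oplus\R\xi$, and the kernel relation $H_{ij}(\xi)\xi_i=0$ from Lemma~\ref{homain}. The only differences are cosmetic: you make the homogeneity reduction in (ii)$\Rightarrow$(i) explicit via Lemma~\ref{iter}, and you phrase the key decomposition in (i)$\Rightarrow$(ii) as an isomorphism $P_{\xi^\perp}\colon(\nabla H(\xi))^\perp\to\xi^\perp$, whereas the paper simply writes $V=\zeta+\lambda\xi$ with $\zeta\in(\nabla H(\xi))^\perp$.
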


\begin{proof}
Our argument is an adaptation of the proof of Proposition 2 on page~102 of~\cite{WX11}. The case covered there is the one with~$B(t) = t^2$.\\
First, we prove that~$(i)$ implies~$(ii)$. Fix~$\xi \in S^{n - 1}$. Assumption~$(i)$ is equivalent to
\bequ \label{pposdefexp}
\left[ B''(H(\xi)) H_i(\xi) H_j(\xi) + B'(H(\xi)) H_{i j}(\xi) \right] \zeta_i \zeta_j > 0 \mbox{ for any } \zeta \in \R^n \setminus \{ 0 \}.
\equ
%If~$\nabla H(\xi)$ is parallel to~$\xi$, then~$(ii)$ directly follows from~\eqref{pposdefexp}. Indeed, given any~$V \in \xi^\perp \setminus \{ 0 \}$, we have that~$V \cdot\nabla H(\xi) = 0$ and so
%$$
%H_{i j}(\xi) V_i V_j = \frac{1}{B'(H(\xi))} \left[ B''(H(\xi)) H_i(\xi) H_j(\xi) + B'(H(\xi)) H_{i j}(\xi) \right] V_i V_j > 0.
%$$
%Then, assume~$\nabla H(\xi)$ not parallel to~$\xi$.
Observe now that~$\nabla H(\xi)$ cannot be orthogonal to~$\xi$, since, by~\eqref{i},~$H_i(\xi) \xi_i = H(\xi) > 0$. Therefore,~${\nabla H(\xi)}^\perp$ and~$\xi$ span the whole of~$\R^n$. Letting now~$V \in \xi^\perp$, we write
$$
V = \zeta + \lambda \xi, \qquad \mbox{for some } \zeta \in {\nabla H(\xi)}^\perp \setminus \{ 0 \}, \lambda \in \R.
$$
Applying~\eqref{pposdefexp} with~$\zeta = V - \lambda \xi$ and using~\eqref{ii}, we get
\begin{equation*}
\begin{split}
0 & < \left[  B''(H(\xi)) H_i(\xi) H_j(\xi) + B'(H(\xi)) H_{i j}(\xi) \right] \zeta_i \zeta_j = B'(H(\xi)) H_{i j}(\xi) \zeta_i \zeta_j \\
& = B'(H(\xi)) H_{i j}(\xi) (V_i - \lambda \xi_i) (V_j - \lambda \xi_j)= B'(H(\xi)) H_{i j}(\xi) V_i V_j,
\end{split}
\end{equation*}
which, by \eqref{Bsign}, gives~$(ii)$.\\
Conversely, assume that~$(ii)$ holds. Let~$V \in \R^n \setminus \{ 0 \}$ and decompose it into~$V = \eta + \lambda \xi$, for~$\eta \in \xi^\perp$,~$\lambda \in \R$. By~\eqref{ii},~\eqref{Bsign} and~$(ii)$ we obtain
\begin{eqnarray*}
&& \left[ B''(H(\xi)) H_i(\xi) H_j(\xi) + B'(H(\xi)) H_{i j}(\xi) \right] V_i V_j \\
&& \qquad = B''(H(\xi)) H_i(\xi) H_j(\xi) V_i V_j + B'(H(\xi)) H_{i j}(\xi)(\eta_i + \lambda \xi_i) (\eta_j + \lambda \xi_j) \\
&& \qquad = B''(H(\xi)) {\left[ V \cdot \nabla H(\xi) \right]}^2  + B'(H(\xi)) H_{i j}(\xi) \eta_i \eta_j \ge B'(H(\xi)) H_{i j}(\xi) \eta_i \eta_j > 0,
\end{eqnarray*}
if~$\eta \ne 0$. If on the other hand~$\eta = 0$, i.e.~$V = \lambda \xi$ with~$\lambda \ne 0$, then, using~\eqref{i} and~\eqref{ii},
\begin{eqnarray*}
&& \left[ B''(H(\xi)) H_i(\xi) H_j(\xi) + B'(H(\xi)) H_{i j}(\xi)  \right] V_i V_j \\
&& \qquad = \lambda^2 \left[ B''(H(\xi)) H_i(\xi) H_j(\xi) \xi_i \xi_j + B'(H(\xi)) H_{i j}(\xi) \xi_i \xi_j \right] = \lambda^2 B''(H(\xi)) H^2(\xi) > 0,
\end{eqnarray*}
so that~$(i)$ is proved.
\end{proof}

Next, we have a result ensuring the convexity of~$H$. We point out that this actually comes as a corollary of Lemma~\ref{WXgen} and Lemma~\ref{homain} together.

\begin{lemma} \label{L990}
Let~$H\in C^2(\R^n\setminus \{0\})$ be a positive homogeneous function of degree~$1$ satisfying~\eqref{Hsign} and~$B \in C^2((0, +\infty))$ be such~\eqref{Bsign} holds. Assume also~$\Hess \,(B \circ H)$ to be positive definite in~$\R^n \setminus \{ 0 \}$. Then~$H$ is convex and
\begin{equation}\label{convex}
H_{ij}(\xi)\,\eta_i\eta_j\ge0\qquad{\mbox{
for any~$\xi\in\R^n\setminus\{0\}$ and~$\eta\in\R^n$.}}
\end{equation}
\end{lemma}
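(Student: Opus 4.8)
The plan is to deduce the pointwise inequality~\eqref{convex} directly from Lemma~\ref{WXgen}, and then to upgrade it to the global convexity of~$H$ by exploiting the degree-one homogeneity. I expect the only real work to be the passage from "$\Hess\,H\succeq0$ off the origin" to honest convexity, since the set $\R^n\setminus\{0\}$ on which the Hessian is available is not convex.

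\emph{Step 1: nonnegativity of $\Hess\,H$ off the origin.} By Lemma~\ref{WXgen}, the hypothesis that $\Hess\,(B\circ H)$ is positive definite on $\R^n\setminus\{0\}$ is equivalent to the statement that, for every $\xi\in S^{n-1}$, the quadratic form $\zeta\mapsto H_{ij}(\xi)\zeta_i\zeta_j$ is positive definite on $\xi^\perp$. I would then fix an arbitrary $\xi\in\R^n\setminus\{0\}$ and $\eta\in\R^n$; by Lemma~\ref{iter} each $H_{ij}$ is positive homogeneous of degree $-1$, so $H_{ij}(\xi)=|\xi|^{-1}H_{ij}(\xi/|\xi|)$ and it suffices to treat $\xi\in S^{n-1}$. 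Writing $\eta=\bar\eta+\lambda\xi$ with $\bar\eta\in\xi^\perp$ and $\lambda\in\R$, identity~\eqref{ii} gives $H_{ij}(\xi)\xi_i=0$, hence
$$
H_{ij}(\xi)\,\eta_i\eta_j=H_{ij}(\xi)\,\bar\eta_i\bar\eta_j\ge0 ,
$$
by Lemma~\ref{WXgen}(ii) (strictly positive if $\bar\eta\ne0$). This is exactly~\eqref{convex}.

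\emph{Step 2: convexity of $H$.} Fix $x,y\in\R^n$ and $t\in[0,1]$. If the segment $[x,y]$ does not contain the origin, then $\varphi(t):=H((1-t)x+ty)$ is of class $C^2$ on $[0,1]$ with $\varphi''\ge0$ by~\eqref{convex}, hence convex, and $H((1-t)x+ty)\le(1-t)H(x)+tH(y)$. If $0\in[x,y]$ and $x\ne0\ne y$, then $y=-sx$ for some $s>0$, and since $H$ is positive homogeneous of degree one (so $H(\lambda v)=\lambda H(v)$ for $\lambda\ge0$, $H(\lambda v)=|\lambda|H(-v)$ for $\lambda<0$, and $H(0)=0$), the point $(1-t)x+ty=(1-t(1+s))x$ satisfies, in both cases $1-t(1+s)\ge0$ and $1-t(1+s)<0$, the bound $H\big((1-t(1+s))x\big)\le(1-t)H(x)+ts\,H(-x)=(1-t)H(x)+tH(y)$, a short computation using only $H(x),H(-x)>0$. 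The remaining cases $x=0$ or $y=0$ are immediate from homogeneity. Hence $H$ is convex on $\R^n$.

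\emph{Main obstacle.} The substantive point is precisely Step~2: because $\Hess\,H$ is controlled only on the non-convex set $\R^n\setminus\{0\}$, the standard second-derivative criterion does not directly apply to segments through the origin, and this gap must be filled by the homogeneity computation above. (Alternatively, one may first establish the subadditivity $H(x+y)\le H(x)+H(y)$—equivalent to convexity for a positively $1$-homogeneous function—by the same splitting argument.) Everything else is routine bookkeeping with Lemmas~\ref{iter},~\ref{homain}, and~\ref{WXgen}.
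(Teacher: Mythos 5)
Your proof is correct and follows exactly the route the paper indicates (it states Lemma~\ref{L990} ``comes as a corollary of Lemma~\ref{WXgen} and Lemma~\ref{homain}'' without further detail): Step~1 is the intended deduction of~\eqref{convex} from Lemma~\ref{WXgen}(ii) together with~\eqref{ii}, and Step~2 supplies the short homogeneity computation for segments through the origin that the paper leaves implicit. The only thing you add beyond the paper's sketch is precisely that careful handling of $0\in[x,y]$, which is a genuine (if minor) gap-filling and is carried out correctly.
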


Following is a linear algebra result
that is crucial for the subsequent proofs of 
Proposition~\ref{SP:SP} and
Theorem~\ref{1D}.

\begin{proposition}\label{corpos}
Let $H$ and $B$ as in the statement of Lemma \ref{L990}. Then, given any 
matrix~$\{c_{i j}\}_{i,j\in\{1,\dots,n\}}$, we have
\begin{equation}\label{EQ0}
H_{i j}(\xi) H_{k \ell}(\xi) c_{i k} c_{j \ell} \ge 0 \qquad \mbox{for any 
} \xi \in \R^n \setminus \{ 0 \}.\end{equation}
Moreover, assume that equality holds in~\eqref{EQ0}
for a vector~$\xi=(\xi_1,\dots,\xi_n)\in\R^n\setminus\{0\}$ such that
\begin{equation}\label{EQ1}
\xi_1= \dots = \xi_{n-1}=0.\end{equation}
Then\footnote{To avoid
confusion, we use indices like~$i$
ranging in~$\{1,\dots,n\}$ and like~$i'$
ranging in~$\{1,\dots,n-1\}$.}
\begin{equation}\label{2.14a}
{\mbox{$c_{i' j'}=0$
for any~$i', j'\in\{1,\dots,n-1\}$.}}\end{equation}
\end{proposition}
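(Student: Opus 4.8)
The plan is to read the left-hand side of~\eqref{EQ0} as a squared Hilbert--Schmidt norm. Fix $\xi \in \R^n \setminus \{0\}$ and set $A := \Hess H(\xi)$, a symmetric $n \times n$ matrix. By Lemma~\ref{L990} it is positive semidefinite; by identity~\eqref{ii} we have $A\xi = 0$; and by Lemma~\ref{WXgen} — applied at $\xi/|\xi| \in S^{n-1}$, recalling that the $H_{ij}$ are positive homogeneous of degree $-1$ by Lemma~\ref{iter} — the restriction of $A$ to $\xi^\perp$ is positive definite. Combining these facts, one deduces $\ker A = \R\xi$. Let $A^{1/2}$ denote the symmetric positive semidefinite square root of $A$; then $\ker A^{1/2} = \R\xi$ and $\operatorname{range} A^{1/2} = \xi^\perp$.

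Writing $C := \{c_{ij}\}$, a regrouping of the four-index sum gives, using $A^T = A$,
$$
H_{ij}(\xi)\,H_{k\ell}(\xi)\,c_{ik}\,c_{j\ell} = \operatorname{tr}\!\big(A\,C\,A\,C^T\big),
$$
and inserting $A = A^{1/2} A^{1/2}$, using the cyclic invariance of the trace and the symmetry of $A^{1/2}$, this equals $\operatorname{tr}\!\big(D D^T\big) = \|D\|^2 \ge 0$ with $D := A^{1/2} C A^{1/2}$ and $\|\cdot\|$ the Hilbert--Schmidt norm. This establishes~\eqref{EQ0}, and shows that equality holds there \emph{if and only if} $A^{1/2} C A^{1/2} = 0$.

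Assume now that equality holds and that~\eqref{EQ1} is in force, so that $\xi$ is a nonzero multiple of $e_n$ and hence $\xi^\perp = \operatorname{span}(e_1, \dots, e_{n-1})$. Fix $j' \in \{1, \dots, n-1\}$. Since $e_{j'} \in \xi^\perp = \operatorname{range} A^{1/2}$, we may choose $v \in \R^n$ with $A^{1/2} v = e_{j'}$; then $0 = A^{1/2} C A^{1/2} v = A^{1/2}\big(C e_{j'}\big)$, so that $C e_{j'} \in \ker A^{1/2} = \R\xi = \R e_n$. The $i$-th component of $C e_{j'}$ being $c_{i j'}$, this forces $c_{i' j'} = 0$ for all $i' \in \{1, \dots, n-1\}$; as $j'$ was arbitrary, we obtain~\eqref{2.14a}.

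The only point requiring a little care is the index bookkeeping in the first display — checking that the contraction $H_{ij} H_{k\ell} c_{ik} c_{j\ell}$ equals $\operatorname{tr}(ACAC^T)$ and not some other pairing such as $\operatorname{tr}(ACAC)$ — after which the square-root factorization makes both halves of the statement routine linear algebra. A slightly more hands-on variant of the rigidity part avoids ranges and kernels altogether: under~\eqref{EQ1} the matrix $A$ has vanishing $n$-th row and column, so the sum reduces to $H_{i'j'}(\xi) H_{k'\ell'}(\xi) c_{i'k'} c_{j'\ell'}$ over indices in $\{1,\dots,n-1\}$, i.e. to $\| (A')^{1/2} C' (A')^{1/2}\|^2$ with $A' := \{H_{i'j'}(\xi)\}$ positive definite and $C' := \{c_{i'j'}\}$; since $(A')^{1/2}$ is then invertible, its vanishing forces $C' = 0$.
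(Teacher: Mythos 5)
Your proof is correct, and it takes a genuinely different (coordinate-free) route from the paper's, which diagonalizes $\Hess H(\xi)$ via an explicit orthogonal matrix $M$, writes $H_{ij}=M_{pi}\lambda_pM_{pj}$, introduces the transformed matrix $\vartheta_{pr}=M_{pi}M_{rm}c_{im}$, and identifies the left side of~\eqref{EQ0} with $\sum_{p,r}\lambda_p\lambda_r(\vartheta_{pr})^2$. Your version compresses this into the single identity
\begin{equation*}
H_{ij}(\xi)H_{k\ell}(\xi)c_{ik}c_{j\ell}=\operatorname{tr}(ACAC^T)=\big\|A^{1/2}CA^{1/2}\big\|^2,
\end{equation*}
which is exactly the paper's sum after diagonalizing (in the eigenbasis $A^{1/2}CA^{1/2}$ has entries $\sqrt{\lambda_p\lambda_r}\,\vartheta_{pr}$). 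The payoff is that the equality case becomes transparent: it is equivalent to $A^{1/2}CA^{1/2}=0$, and the rigidity~\eqref{2.14a} then follows in two lines from $\ker A^{1/2}=\R\xi$ and $\operatorname{range}\,A^{1/2}=\xi^\perp$, whereas the paper must track the vanishing pattern of $\vartheta_{p'r'}$, argue that the $n$-th row of $M$ is parallel to $\xi$, and invert the change of basis. Your ``hands-on variant,'' observing that under~\eqref{EQ1} the $n$-th row and column of $A$ vanish so everything reduces to the positive definite $(n-1)\times(n-1)$ block, is arguably the shortest route of all and would make a nice remark. All the ingredients you invoke are available at this point in the paper: $A\xi=0$ from~\eqref{ii}, positive semidefiniteness from Lemma~\ref{L990}, and positive definiteness on $\xi^\perp$ from Lemma~\ref{WXgen} combined with the degree~$-1$ homogeneity of $H_{ij}$ from Lemma~\ref{iter}.
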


\begin{proof}
We follow the argument given at the end of the proof of Proposition 1 of \cite{FV13}. 
By points~$(ii)$ in Lemma~\ref{WXgen}
and~\eqref{ii}, we know that
\begin{equation}\label{EQ3}\begin{split}&
{\mbox{$\Hess(H)(\xi)$
has~$n-1$ strictly positive eigenvalues and
one null eigenvalue}}\\
&{\mbox{(the latter corresponding to the eigenvector~$\xi$).}}
\end{split}\end{equation}
Therefore, we can
diagonalize it via an orthogonal matrix~$\{M_{ij}\}_{i,j\in
\{1,\dots,n\}}$, by 
writing
\begin{equation}\label{EQ2}
{\mbox{$H_{i j} = M_{p i} \lambda_p M_{p j}$, with $\lambda_1\ge\dots\ge
\lambda_{n-1}>
\lambda_n =0$.}}\end{equation} So, setting 
\begin{equation}\label{EQK}
\vartheta_{p r} := M_{p i} M_{r m} c_{i m},
\end{equation}
for fixed $p$ and $r$, we have that
$$
0 \le {(\vartheta_{p r})}^2 = (M_{p i} M_{r k} c_{i k}) (M_{p j} M_{r \ell} c_{j \ell}) = M_{p i} M_{p j} M_{r k} M_{r \ell} c_{i k} c_{j \ell}.
$$
Now, multiply by $\lambda_p \lambda_r$ and sum over $p$ and $r$. We get
\begin{equation}\label{EQQ}
0 \le \lambda_p\lambda_r {(\vartheta_{p r})}^2=
M_{p i} \lambda_p M_{p j} M_{r k} \lambda_r M_{r \ell} c_{i k} c_{j \ell} = H_{i j} H_{k \ell} c_{i k} c_{j \ell},
\end{equation}
which proves~\eqref{EQ0}.

Now we assume~\eqref{EQ1} and we
suppose that equality holds 
in~\eqref{EQ0}. We claim that
\begin{equation}\label{EQ2.2}
{\mbox{$M_{n i'}=0$
for any $i'\in \{1,\dots,n-1\}$.}}
\end{equation}
For this, we use a classical linear algebra
procedure: we define~$w_i:=M_{ni}$ and we 
consider the 
vector~$w=(w_1,\dots,w_n)$. We exploit~\eqref{EQ2}
and we have, for 
any~$j\in\{1,\dots,n\}$,
\begin{eqnarray*}
\big(\Hess(H)(\xi)w\big)_j = 
H_{jk}w_k=
M_{ij} \lambda_i M_{ik} w_{k}=
M_{ij} \lambda_i M_{ik} M_{nk}\\
=M_{ij} \lambda_i \delta_{in}=M_{nj}\lambda_n=0=(0\, w)_j. 
\end{eqnarray*}
That is,~$w$ is an eigenvector
for~$\Hess(H)(\xi)$ and so, 
by~\eqref{EQ3}, $w$ is parallel to~$\xi$. Thus, by~\eqref{EQ1},
$w$ is parallel to~$(0,\dots,0,1)$ and so~$w_{i'}=0$
for any~$i'\in \{1,\dots,n-1\}$, proving~\eqref{EQ2.2}.

Now, if equality holds in~\eqref{EQ0}, then~\eqref{EQQ} gives that
$$ 0=\lambda_p\lambda_r {(\vartheta_{p r})}^2.$$
Consequently, by~\eqref{EQ2}, we obtain that
\begin{equation}\label{EQ55}
{\mbox{$\vartheta_{p' r'}=0$
for any~$p',r'\in\{1,\dots,n-1\}$.}}\end{equation}
Hence, we invert~\eqref{EQK}
and we obtain that
$$ M_{pj} M_{rk} \vartheta_{p r} = 
M_{p j} M_{p i} M_{r k} M_{r m} c_{i m}
=\delta_{ij}\delta_{mk} c_{im}=c_{jk}$$
for any~$j,k\in\{1,\dots,n\}$.
So, recalling~\eqref{EQ2.2} and~\eqref{EQ55}, we have,
for any~$j',k'\in\{1,\dots,n-1\}$,
$$ c_{j'k'}=
M_{pj'} M_{rk'} \vartheta_{p r}
=M_{p'j'} M_{r'k'} \vartheta_{p' r'}=0,$$
where the indices~$p',r'$ are summed over~$\{1,\dots,n-1\}$.
\end{proof}

Now we collect two technical inequalities concerning function~$B$ which will be used in the proofs of Theorems~\ref{1DAUX} and~\ref{liou}.

\begin{lemma} \label{blem}
Let~$B \in C^2((0, +\infty)) \cap C^0([0, +\infty))$ be a function satisfying~$B(0) = 0$ and~\eqref{Bsign}. Then,
\bequ \label{bpos}
B'(t) t - B(t) > 0,
\equ
for any~$t > 0$.
\end{lemma}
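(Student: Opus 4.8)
The plan is to prove the strict inequality $B'(t)t - B(t) > 0$ for $t>0$ by recognizing that the function $b(t) := B'(t)t - B(t)$ vanishes at $t=0$ and has a strictly positive derivative on $(0,+\infty)$. First I would observe that, since $B \in C^2((0,+\infty)) \cap C^0([0,+\infty))$ with $B(0) = 0$ and $B'(0) = 0$ (recall $B \in C^1([0,+\infty))$ with $B'(0)=0$ from the standing hypotheses on $B$, although here only the weaker assumptions of the lemma are needed), the quantity $b(t)$ extends continuously to $t=0$ with $b(0)=0$: indeed $B(t) \to 0$ as $t \to 0^+$, and $B'(t)t \to 0$ because $B'(t)t - B(t) = \int_0^t B''(s)\,s\,ds$ — wait, let me instead argue directly.

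Here is the clean approach. For $t > 0$, compute $b'(t) = \tfrac{d}{dt}\big(B'(t)t - B(t)\big) = B''(t)t + B'(t) - B'(t) = B''(t)\,t$. By the sign condition \eqref{Bsign}, $B''(t) > 0$ and $t > 0$, so $b'(t) = B''(t)t > 0$ for all $t \in (0,+\infty)$; hence $b$ is strictly increasing on $(0,+\infty)$. It therefore suffices to check that $b(t) \to 0$ as $t \to 0^+$, for then $b(t) > \lim_{s\to 0^+} b(s) = 0$ for every $t > 0$. To see the limit, note $B$ is continuous at $0$ with $B(0)=0$, so $B(t) \to 0$; and by \eqref{Bsign} $B'$ is increasing on $(0,+\infty)$ while $B(t) = \int_0^t B'(s)\,ds \geq 0$, so for $t>0$ we have $0 < B'(t)\,t = \int_0^t B'(t)\,ds$. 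Comparing with $B(t) = \int_0^t B'(s)\,ds$ and using monotonicity of $B'$ gives $0 \le B'(t)t - B(t) = \int_0^t (B'(t) - B'(s))\,ds$; since the integrand is bounded by $B'(t) - B'(t/2) \to 0$ is not immediate, so I would instead simply invoke that $b(t) = \int_0^t b'(s)\,ds = \int_0^t B''(s)\,s\,ds$ as a consequence of the fundamental theorem of calculus applied on $[\varepsilon, t]$ together with the continuity of $b$ at $0$, which yields both $b(0)=0$ and, since the integrand $B''(s)s$ is strictly positive on $(0,t)$, the strict inequality $b(t) > 0$ directly.

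The only subtlety — and the one point worth handling carefully rather than routinely — is justifying that $b$ extends continuously to $0$ with value $0$, i.e. that $B'(t)t \to 0$ as $t\to 0^+$; absent an a priori bound on $B'$ near $0$ one must use $B(t) \to 0$ together with the fact that $B'$ is monotone (hence has a limit, possibly $+\infty$, at $0^+$) and the mean value relation $B(t) = B(t) - B(0) = B'(\tau_t)\,t$ for some $\tau_t \in (0,t)$, combined with $B' \le B'(t)$ giving $B'(\tau_t)t \le B'(t)t$, which pins the behaviour down. Once this is in place the proof is a two-line monotonicity argument: $b(0) = 0$ and $b' = B''(\cdot)\,(\cdot) > 0$ on $(0,+\infty)$ imply $b > 0$ on $(0,+\infty)$, which is exactly \eqref{bpos}.
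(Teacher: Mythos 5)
Your argument is the same as the paper's: set $b(t) := B'(t)t - B(t)$, compute $b'(t) = B''(t)\,t > 0$, and conclude $b(t) > b(0^+) = 0$ by monotonicity. The paper simply asserts $b(0^+) = 0$ without comment, whereas you rightly flag it as the only point needing care.

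Your justification of that step, however, is a bit muddled. The mean value relation $B(t) = B'(\tau_t)\,t \le B'(t)\,t$ only gives the \emph{lower} bound $B'(t)t \ge B(t)$ (equivalently $b(t)\ge 0$); it does not control $B'(t)t$ from above, which is what you need to conclude $B'(t)t \to 0$. Also, $B'$ cannot tend to $+\infty$ as $t\to 0^+$: since $B''>0$, $B'$ is \emph{increasing} on $(0,\infty)$ and bounded below by $0$, so $B'(0^+) = \inf_{t>0} B'(t) \in [0,\infty)$ is automatically finite. That observation is the clean fix: $B'(t)t \to B'(0^+)\cdot 0 = 0$ and $B(t)\to B(0)=0$ by continuity, hence $b(0^+)=0$. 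The FTC route ($b(t)=\int_0^t B''(s)s\,ds$) is circular as you present it, since it invokes continuity of $b$ at $0$, precisely what is being proved.
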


\begin{proof}
For any~$t > 0$ set
\bequ \label{bdef}
b(t) := B'(t) t - B(t).
\equ
Clearly,~$b \in C^1((0, +\infty))$. By differentiation we get, for~$t > 0$,
$$
b'(t) = B''(t) t + B'(t) - B'(t) = B''(t) t > 0,
$$
since~$B''(t)$ is positive. Thus,~$b$ is strictly increasing and so
$$
b(t) > b(0^+) = 0, \qquad \mbox{ for any } t > 0,
$$
which proves the lemma.
\end{proof}

%Observe that the requirements of Lemma \ref{blem} are in particular fulfilled by the function~$B$ considered in the paper and that, therefore,~\eqref{bpos} holds for such a choice.

\begin{lemma} \label{blem2}
Let~$B \in C^2((0, +\infty)) \cap C^1([0, +\infty))$ be such~$B(0) = 0$ and~$H \in C^2(\R^n \setminus \{ 0 \})$ be a positive homogeneous function of degree~$1$ satisfying~\eqref{Hsign}. Assume that they either satisfy~(A) or~(B). Then, for any~$M > 0$, there exists~$\varepsilon > 0$ such that
\bequ \label{bpos2}
B'(t) t - B(t) \ge \varepsilon t^{p^*} \mbox{ for any } t \in [0, M],
\equ
where
$$
p^* =
\begin{cases}
p & \mbox{ if (A) holds with } \kappa = 0 \\
2 & otherwise.
\end{cases}
$$
\end{lemma}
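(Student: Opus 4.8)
The plan is to analyze the function $b(t) = B'(t)t - B(t)$ near $t = 0$ and on the compact interval $[0,M]$ separately, exploiting the two hypotheses (A) and (B) to pin down the correct power $p^*$. We already know from Lemma~\ref{blem} that $b(t) > 0$ for all $t > 0$, and from \eqref{bdef} that $b'(t) = B''(t)t$. Since $b$ is continuous on $[0,M]$ with $b(0) = 0$ and strictly positive on $(0,M]$, the only place where the claimed inequality $b(t) \ge \varepsilon t^{p^*}$ can fail is in a neighborhood of $0$: away from $0$, say on $[\delta, M]$, we simply take $\varepsilon$ small enough that $\varepsilon M^{p^*} \le \min_{[\delta,M]} b > 0$. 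So the whole problem reduces to a local estimate of $b$ near the origin, and the key is to get a lower bound on $b''$ (or equivalently on the behaviour of $B''$) as $t \to 0^+$.

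The mechanism is that hypothesis (A) or (B) forces a quantitative ellipticity on $\Hess(B\circ H)$, which in turn controls $B''$ from below. Concretely, evaluating $\left[\Hess(B\circ H)(\xi)\right]_{ij}\zeta_i\zeta_j$ along $\xi = t e$ for a unit vector $e$ and $\zeta = \xi$ itself, and using Lemma~\ref{homain} (identities \eqref{i} and \eqref{ii}) together with the homogeneity of $H$ and $H_i$, one computes that this quadratic form equals $B''(H(te))H^2(te) = B''(tH(e)) t^2 H^2(e)$ — exactly as in the last display of the proof of Lemma~\ref{WXgen}. Under (A) the left side is $\ge \gamma(\kappa + t)^{p-2}|te|^2 = \gamma(\kappa+t)^{p-2}t^2$, so $B''(tH(e)) \ge c\,(\kappa+t)^{p-2}$ for a constant $c = c(e) > 0$; renaming $s = tH(e)$ and absorbing $H(e)$ (which ranges in a compact subset of $(0,\infty)$ over $e \in S^{n-1}$) this gives $B''(s) \ge c'(\kappa + c''s)^{p-2}$ for $s$ small. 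Under (B), applied with $K$ chosen so that $te$ stays in the relevant ball, we get instead $B''(s) \ge c' > 0$ for small $s$. Then, since $b(t) = \int_0^t \int_0^\tau B''(\sigma)\,d\sigma\,d\tau$ (because $b(0) = b'(0^+) = 0$, the latter since $b'(t) = B''(t)t$ and $B''$ is — by what we just showed — at worst of order $t^{p-2}$ with $p>1$, hence $B''(t)t \to 0$), integrating the lower bound on $B''$ twice yields $b(t) \gtrsim t^p$ when $\kappa = 0$ in (A) (integrating $\sigma^{p-2}$ twice), and $b(t) \gtrsim t^2$ in all other cases — either because $\kappa > 0$ makes $(\kappa+\sigma)^{p-2}$ bounded below by a positive constant near $0$, or because (B) directly gives $B'' \gtrsim 1$. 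This is precisely the dichotomy defining $p^*$.

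The main obstacle, or at least the point requiring care, is the reduction of the multivariable ellipticity condition on $\Hess(B\circ H)$ to the scalar inequality on $B''$: one must choose the test vectors $\xi,\zeta$ judiciously so that the $B'(H)H_{ij}$ term drops out (here $\zeta$ parallel to $\xi$ works, by \eqref{ii}) while retaining a useful coercivity estimate, and one must check that the resulting constants can be made uniform in the direction $e \in S^{n-1}$ — this uses the continuity and strict positivity of $H$ on the compact sphere, i.e.\ \eqref{Hsign}. A secondary technical point is verifying $b'(0^+) = 0$ so that the double-integral representation of $b$ is legitimate; this follows from $b'(t) = B''(t)t$ together with the just-derived growth bound $B''(t) = O(t^{p-2})$ as $t \to 0^+$ and $p > 1$. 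Once these are in place the estimate \eqref{bpos2} follows by elementary integration and the compactness argument away from the origin described above.
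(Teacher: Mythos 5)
Your overall strategy --- test the ellipticity of $\Hess(B\circ H)$ along a ray with $\zeta = \xi$ (so that the $B'H_{ij}$ term drops by~\eqref{ii} and one is left with $B''(H(\xi))H^2(\xi)$), extract the scalar bound $B''(s) \gtrsim (\kappa + c\,s)^{p-2}$ (or $\gtrsim 1$ under (B)), then integrate to obtain the lower bound on $b(t) = B'(t)t - B(t)$ --- is the same as the paper's. The paper implements the final step more cleanly: it sets $E(t) := b(t) - \varepsilon t^{p^*}$, notes $E(0)=0$, and checks
$E'(t) = B''(t)t - \varepsilon p^* t^{p^*-1} \ge (c'' - \varepsilon p^*)\,t^{p^*-1} \ge 0$ on all of $(0,M]$
for $\varepsilon$ small, so no splitting of $[0,M]$ into $[0,\delta]$ and $[\delta,M]$ is needed.

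There is, however, a concrete error in your integral representation. You assert
$b(t) = \int_0^t \int_0^\tau B''(\sigma)\,d\sigma\,d\tau$,
but that double integral equals $B(t) - B'(0^+)\,t$, not $b(t)$: the inner integral is $B'(\tau)-B'(0^+)$ and the outer then gives $B(t)-B(0)-B'(0^+)t$. If you intended the inner integrand to be $b''$ rather than $B''$, note that under the hypothesis $B\in C^2((0,+\infty))$ the function $b$ is only $C^1$ (since $b'(t)=B''(t)t$ and no assumption is made on $B'''$), so that formula is also unavailable. The correct and sufficient identity is the single integral
$b(t)=\int_0^t B''(\tau)\,\tau\,d\tau$,
which follows at once from $b(0)=0$ and $b'(\tau)=B''(\tau)\tau$; plugging in $B''(\tau)\ge c''\,\tau^{p^*-2}$ then yields $b(t)\ge (c''/p^*)\,t^{p^*}$ on the whole of $[0,M]$ in one stroke. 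You happened to land on the right power because integrating $\tau^{p^*-2}$ twice and integrating $\tau\cdot\tau^{p^*-2}$ once both produce $t^{p^*}$, but as written the step is not valid and must be replaced by the single-integral identity (or by the paper's $E'\ge0$ argument, which is equivalent and avoids integral representations altogether).
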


\begin{proof}
Let~$M > 0$,~$b$ be as in~\eqref{bdef} and~$\varepsilon > 0$ to be determined later. Define, for any non-negative~$t$,
$$
E(t) := b(t) - \varepsilon t^{p^*} = B'(t) t - B(t) - \varepsilon t^{p^*}.
$$
If we prove that
\bequ \label{b.1}
E'(t) \ge 0 \mbox{ for any } t \in (0, M],
\equ
is true, then we are done, since in this case we have
$$
E(t) \ge E(0) = 0 \mbox{ for any } t \in (0, M],
$$
which leads directly to~\eqref{bpos2}. To show that~\eqref{b.1} holds, fix~$t \in (0, M]$ and choose~$\xi \in \R^n \setminus \{ 0 \}$ in a way that~$t = H(\xi)$. Notice that this can surely be done by taking~$\xi := t H^{-1}(\nu) \nu$, for any~$\nu \in \R^n \setminus \{ 0 \}$. In particular, we have that
\bequ \label{b.2}
|\xi| = H^{-1} \left(\frac{\xi}{|\xi|}\right) t \le h t \le h M,
\equ
if we set~$h^{-1} = \inf_{S^{n - 1}} H > 0$. Applying now the first formula of~(A) with~$\zeta = \xi$ we get
\begin{equation*}
\begin{split}
c_1 {(\kappa + |\xi|)}^{p - 2} {|\xi|}^2 & \le  \left[ \Hess \,(B \circ H)(\xi) \right]_{i j} \xi_i \xi_j = \partial_{\xi_j} \left( B'(H(\xi)) H_i(\xi) \right) \xi_i \xi_j \\
& = \left( B''(H(\xi)) H_i(\xi) H_j(\xi) + B'(H(\xi)) H_{i j}(\xi) \right) \xi_i \xi_j \\
& = B''(H(\xi)) H^2(\xi),
\end{split}
\end{equation*}
where in the last equality we used~\eqref{i} and~\eqref{ii}. By the homogeneity of~$H$ we thus may conclude that
\bequ \label{b.3}
B''(H(\xi)) \ge c_1 {(\kappa + |\xi|)}^{p - 2} {|\xi|}^2 H^{-2}(\xi) \ge c {(\kappa + |\xi|)}^{p - 2},
\equ
for some positive constant~$c$. Now, if~$\kappa > 0$ and~$p \ge 2$, we drop~$|\xi|$ in the last parenthesis, getting
$$
B''(H(\xi)) \ge c {\kappa}^{p - 2}.
$$
If~$\kappa > 0$ but~$1 < p < 2$, then by~\eqref{b.2} we have
$$
B''(H(\xi)) \ge c {(\kappa + h M)}^{p - 2}. 
$$
If on the other hand~$\kappa = 0$, we simply rewrite~\eqref{b.3}, obtaining
$$
B''(H(\xi)) \ge c {|\xi|}^{p - 2} \ge c' H^{p - 2}(\xi),
$$
for some positive constant~$c'$. Collecting these three cases and
making explicit the dependence on~$t$, we get
\bequ \label{5.4}
B''(t) \ge c'' t^{p^* - 2},
\equ
for some positive constant~$c''$. An analogous computation shows that the same result holds also when~(B) is in force. By~\eqref{5.4} and choosing~$\varepsilon$ small enough, we compute
$$
E'(t) = B''(t) t - \varepsilon p^* t^{p^* - 1} \ge (c'' - \varepsilon p^*) t^{p^* - 1} \ge 0,
$$
which gives~\eqref{b.1}.
\end{proof}

Notice that, in the setting of the paper, Lemma~\ref{blem} actually comes as a corollary of Lemma~\ref{blem2}. Nevertheless, we preferred to state them independently one to the other, since the hypotheses required by the first do not involve the function~$H$ at all.

Finally, we present a lemma ensuring the continuity of the second derivative of~$B$ at the origin starting from some regularity assumptions on the composition~$B \circ H$. The framework in which this result is meant to be set is that of hypothesis~(B) and, in fact, explicit use of it will be made in Section~\ref{S:AUX2}.

\begin{lemma} \label{der2B0}
Let~$H \in C^2(\R^n \setminus \{ 0 \})$ be a positive homogeneous function of degree~$1$ satisfying~\eqref{Hsign} and~$B \in C^1([0, +\infty)) \cap C^2((0, +\infty))$, with~$B(0) = B'(0) = 0$. Assume in addition that~$B \circ H$ has some pure second derivative, say, the first, continuous at the origin. Then,~$B \in C^2([0, +\infty))$ with
\bequ \label{der2B0th}
B''(0) = H^{-2}(e_1) \frac{\partial^2 (B \circ H)}{\partial \xi_1^2} \, (0).
\equ
In particular, this holds if~$B \circ H \in C^2(\R^n)$.
\end{lemma}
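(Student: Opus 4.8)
The plan is to transfer the regularity of $B\circ H$ at the origin onto $B$ by restricting $B\circ H$ to the positive $\xi_1$-semiaxis and exploiting the homogeneity of $H$. As a preliminary remark, note that by Lemma~\ref{derBH0} (applied with $d=1$, which is legitimate since $B(0)=B'(0)=0$) the composition $B\circ H$ extends to a $C^1$ function on the whole of $\R^n$; together with $B\in C^2((0,+\infty))$ and $H\in C^2(\R^n\setminus\{0\})$, which give $B\circ H\in C^2(\R^n\setminus\{0\})$, this makes the hypothesis on the pure second derivative meaningful. The ``In particular'' clause is then immediate, since $B\circ H\in C^2(\R^n)$ forces $\partial^2(B\circ H)/\partial\xi_1^2$ to be continuous everywhere, hence at the origin.

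Next, for $t>0$ the positive $1$-homogeneity of $H$ gives $H(te_1)=tH(e_1)$, so that $(B\circ H)(te_1)=B(tH(e_1))$. Differentiating this identity twice in $t$ on the open half-line $\{t>0\}$, where all the functions involved are of class $C^2$ and the chain rule applies, and identifying the $t$-derivatives along the $\xi_1$-direction with the corresponding partial derivatives, one obtains
$$\frac{\partial^2(B\circ H)}{\partial\xi_1^2}(te_1)=\frac{d^2}{dt^2}(B\circ H)(te_1)=H^2(e_1)\,B''\big(tH(e_1)\big),\qquad t>0.$$
Letting $t\to 0^+$ and using that $\partial^2(B\circ H)/\partial\xi_1^2$ is continuous at the origin, the left-hand side tends to $\partial^2(B\circ H)/\partial\xi_1^2\,(0)$; since $H(e_1)>0$ by~\eqref{Hsign}, it follows that the one-sided limit $\ell:=\lim_{s\to0^+}B''(s)$ exists and equals $H^{-2}(e_1)\,\partial^2(B\circ H)/\partial\xi_1^2\,(0)$.

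Finally, I would upgrade this limit statement to $C^2$-regularity of $B$ at $0$ by a standard one-dimensional argument. Since $B\in C^1([0,+\infty))$ and $B\in C^2((0,+\infty))$, the mean value theorem yields for each $h>0$ a point $\theta_h\in(0,h)$ with $(B'(h)-B'(0))/h=B''(\theta_h)$; letting $h\to0^+$ (so that $\theta_h\to0^+$) shows that $B'$ is differentiable at $0$ with $B''(0)=\ell$, and that $B''$ is continuous on $[0,+\infty)$. This gives $B\in C^2([0,+\infty))$ together with the stated formula~\eqref{der2B0th}.

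All the computations here are routine; the only point requiring a little care is the bookkeeping of \emph{one-sided} limits. The hypothesis controls $B\circ H$ only through its behaviour on rays emanating from the origin, so the key identity above must be read on $\{t>0\}$ and the information about $B''$ extracted as a right limit, which is then promoted to two-sided $C^2$-regularity by the elementary lemma recalled in the last step. I do not expect any genuine obstacle beyond this minor technicality.
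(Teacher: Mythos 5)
Your proof is correct and follows essentially the same route as the paper's: restrict to the ray $t\mapsto te_1$, exploit the homogeneity $H(te_1)=tH(e_1)$ to reduce to a one-variable identity $H^2(e_1)B''(tH(e_1))=\partial_{11}(B\circ H)(te_1)$ for $t>0$, and let $t\to0^+$. (The paper reaches the same intermediate identity by first writing $\partial_{11}(B\circ H)(te_1)=B''(tH(e_1))H_1^2(e_1)+t^{-1}B'(tH(e_1))H_{11}(e_1)$ and then invoking $H_1(e_1)=H(e_1)$ and $H_{11}(e_1)=0$ from Lemma~\ref{homain}; you obtain it more directly by restricting to the ray before differentiating, which sidesteps the second term.) Your explicit mean-value-theorem step to pass from the existence of $\lim_{s\to0^+}B''(s)$ to genuine $C^2([0,+\infty))$-regularity of $B$ fills in a point that the paper leaves implicit, so it is a welcome addition rather than a deviation.
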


\begin{proof}
Since, for every $\xi \neq 0$,
$$
\frac{\partial^2 (B \circ H)}{\partial \xi_1^2}(\xi) = B''(H(\xi)) H_1^2(\xi) + B'(H(\xi)) H_{1 1}(\xi),
$$
by choosing~$\xi = t e_1$, with~$t > 0$, and the homogeneity properties of~$H$ we obtain
\bequ \label{der2B0i}
\frac{\partial^2 (B \circ H)}{\partial \xi_1^2} \, (t e_1) = B''(t H(e_1)) H_1^2(e_1) + \frac{B'(t H(e_1))}{t} \, H_{1 1}(e_1).
\equ
Now, observe that
$$
H_1(e_1) = \nabla H(e_1) \cdot e_1 = H(e_1) > 0,
$$
by~\eqref{i} and
$$
H_{1 1}(e_1) = \nabla H_1(e_1) \cdot e_1 = 0,
$$
by~\eqref{ii}. Therefore, by~\eqref{der2B0i} we get
$$
B''(t H(e_1)) =  H^{-2}(e_1) \frac{\partial^2 (B \circ H)}{\partial \xi_1^2} \, (t e_1),
$$
which yields~\eqref{der2B0th} by passing to the limit as~$t \rightarrow 0^+$.
\end{proof}

\section{Regularity of the solutions}\label{S:2}

In this short section we point out some regularity properties of the weak solutions of~\eqref{eleq}.

\begin{proposition} \label{regprop1}
Let~$u$ be as in Theorem~\ref{gradpest}. Then, given any~$x_0 \in \R^n$ and~$R \in (0, 1)$, there exist~$\alpha \in (0,1)$ and~$C>0$, 
depending only on~$n$, $R$, $\| u \|_{L^\infty(\R^n)}$ and the constants involved in~(A) or~(B), so that
\begin{eqnarray}
& \| \nabla u \|_{L^\infty(\R^n)} \le C, \label{gradest} \\
& |\nabla u(x) - \nabla u(y)| \le C R^{- \alpha } {|x - y|}^\alpha, \label{gradhold}
\end{eqnarray}
for any $x, y \in B_R(x_0)$. In particular,~$u \in C_\loc^{1, \alpha}(\R^n)$, for such~$\alpha$.
\end{proposition}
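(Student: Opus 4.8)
The plan is to reduce the statement to the classical interior regularity theory for quasilinear elliptic equations by localizing and by checking that the structure conditions of that theory (in the form of DiBenedetto, Tolksdorf, Lieberman, or Uhlenbeck, depending on the case) are met by our operator. First I would fix $x_0\in\R^n$ and work on the fixed ball $B_1(x_0)$, so that all quantities depend only on $\|u\|_{L^\infty(\R^n)}$ and the structural constants; the rescaled statement on $B_R(x_0)$ then follows by a standard covering/scaling argument, which is where the explicit $R^{-\alpha}$ factor in~\eqref{gradhold} comes from. Writing the equation in the form $\dive \mathbf{a}(\nabla u)=-F'(u)=:g$, where $a_i(\xi):=B'(H(\xi))H_i(\xi)$, I would note that $g\in L^\infty_\loc$ since $u\in L^\infty$ and $F\in C^{2,\beta}_\loc$, so the right-hand side is a bounded measurable function on $B_1(x_0)$.

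The key point is to verify the ellipticity and growth bounds for $\mathbf{a}$. In case~(ii), hypothesis~(B) gives exactly that the Hessian $\Hess(B\circ H)$ is uniformly elliptic on $\{|\xi|\le K\}$ for every $K$; combined with $u\in W^{1,\infty}(\R^n)$ (so $|\nabla u|\le K:=\|\nabla u\|_{L^\infty}$ a.e.) and with the $C^{3,\beta}_\loc(\R^n)$ regularity of $B\circ H$ asserted in~(B), the operator is a uniformly elliptic $C^{1}$ operator with bounded right-hand side, and the De Giorgi--Nash--Moser theory followed by Schauder-type estimates for the gradient yields~\eqref{gradhold} directly; here the only preliminary needed is the a priori bound~\eqref{gradest}, which in case~(ii) is part of the hypothesis $u\in W^{1,\infty}$. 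In case~(i), hypothesis~(A) furnishes the $p$-Laplacian-type structure: the first inequality of~(A) is the $(\kappa+|\xi|)^{p-2}$ ellipticity from below and the second is the matching bound from above, which are precisely the structure conditions under which the interior gradient bound~\eqref{gradest} (for bounded weak solutions with bounded right-hand side) and the interior $C^{1,\alpha}$ estimate~\eqref{gradhold} are known. One should also check that $\mathbf{a}$ is well-defined and continuous up to $\xi=0$: this is supplied by Lemma~\ref{derBH0} (with $d=1$, $B'(0)=0$), which gives $a_i(\xi)\to 0$ as $\xi\to 0$ and $B\circ H\in C^1(\R^n)$, so the equation makes sense weakly in the stated Sobolev classes.

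The main obstacle, and the place where some care is genuinely required, is the passage from the structural hypotheses~(A)/(B) to the precise hypotheses of the quoted regularity theorems: those theorems are usually stated for operators $\mathbf{a}(x,u,\nabla u)$ satisfying pointwise ellipticity/growth conditions on $\mathbf{a}$ and $\partial_\xi \mathbf{a}$ separately, whereas~(A) is phrased in terms of $\Hess(B\circ H)=\partial_\xi \mathbf{a}$ only. One therefore has to integrate: from $|\Hess(B\circ H)(\xi)|\le \Gamma(\kappa+|\xi|)^{p-2}$ and $a_i(0)=0$ one recovers the growth $|\mathbf{a}(\xi)|\lesssim (\kappa+|\xi|)^{p-1}$, and from the lower bound one recovers the monotonicity $(\mathbf{a}(\xi)-\mathbf{a}(\eta))\cdot(\xi-\eta)\gtrsim (\kappa+|\xi|+|\eta|)^{p-2}|\xi-\eta|^2$; with these in hand the cited theory applies verbatim. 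I expect this verification, together with bookkeeping of how the constants depend on $R$ under scaling $u\mapsto u(x_0+Rx)$ (which changes the right-hand side by a factor $R$ and is harmless on $R\in(0,1)$), to be the only substantive content; the rest is a direct citation. The concluding sentence $u\in C^{1,\alpha}_\loc(\R^n)$ is then immediate from~\eqref{gradhold} holding on every ball.
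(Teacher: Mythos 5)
Your proposal is correct and follows essentially the same route as the paper: the paper likewise reduces case~(i) to the $p$-Laplacian-type interior estimates of Tolksdorf~\cite{T84} and DiBenedetto~\cite{DiB83}, and case~(ii) to the uniformly elliptic theory of Ladyzhenskaya--Uraltseva~\cite{LU68} (Theorem~1.1, p.~339), after observing that in case~(ii) the gradient bound~\eqref{gradest} is part of the hypothesis $u\in W^{1,\infty}$. Your discussion of how the Hessian-based structure conditions~(A) translate into the monotonicity and growth conditions on $\mathbf a=\nabla(B\circ H)$, and of the role of Lemma~\ref{derBH0} at $\xi=0$, supplies detail the paper leaves implicit; the only small point the paper makes explicitly that you omit is that the differentiation of the equation in case~(ii) is justified by first checking $u\in W^{2,2}_\loc$ (Proposition~1 of~\cite{T84}).
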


\begin{proof}
In case~$(i)$ of Theorem~\ref{gradpest} holds, we can apply Theorem~1 on page~127 of~\cite{T84}. Notice that the ellipticity and growth conditions there required are satisfied by assumption (A) and the structure of equation~\eqref{eleq}. Condition~(1.7) of~\cite{T84} is also valid, due to the fact that~$f$ is continuous and~$u$ bounded. Finally, the locally boundedness of the gradient
in~\cite{T84}
could be easily extended to the whole of~$\R^n$, giving 
\eqref{gradest}. See also \cite{DiB83}.\\
If on the other hand~$(ii)$ is in force, then~\eqref{gradest} is already satisfied. In order to obtain~\eqref{gradhold}, the uniform ellipticity of the Hessian of~$B(H(\nabla u))$ allows us to appeal to Theorem~1.1 on page~339 of~\cite{LU68}
(notice that we know in addition that~$u\in W_\loc^{2, 2}(\R^n)$ in this
case, thanks to Proposition~1 in~\cite{T84},
the boundedness of~$\nabla u$ and the structural conditions in~$(ii)$).
\end{proof}

If we stay far from the points on which~$\nabla u$ vanishes, then we can obtain even more regularity for~$u$, as displayed by the following result:

\begin{proposition} \label{regprop2}
Let~$u$ be as in Theorem~\ref{gradpest}. Then, for
any~$x \in \R^n$ with~$\nabla u(x) \ne 0$
there exists~$R > 0$ and~$\alpha \in (0, 1)$ such that~$u
\in C^{3, \alpha}(B_R(x))$. 

In particular, we have that~$u \in C^{3}(\{ \nabla u \ne 0 \})$.

Moreover, if assumption~(ii) in
Theorem~\ref{gradpest} holds, we have the
stronger conclusion that~$u \in C^{3,\alpha}_\loc(\R^n)$.
\end{proposition}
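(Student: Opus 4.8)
The plan is to show that, away from the critical set $\{\nabla u=0\}$, equation~\eqref{eleq} becomes a genuine uniformly elliptic quasilinear equation with $C^{2,\beta}$ structure, to which classical Schauder theory applies. To localise, I would fix $x\in\R^n$ with $\nabla u(x)\neq0$; by Proposition~\ref{regprop1}, $\nabla u$ is continuous, so there are $R>0$ and $0<m\le M$ with $\nabla u(B_R(x))\subseteq K:=\{\xi\in\R^n:m\le|\xi|\le M\}$. On the compact set $K\subset\R^n\setminus\{0\}$ the map $B\circ H$ is of class $C^{3,\beta}$: under~(B) this is part of the hypothesis, while under~(A) it follows from $B\in C^{3,\beta}_\loc((0,+\infty))$, $H\in C^{3,\beta}_\loc(\R^n\setminus\{0\})$ and $H>0$ on $\R^n\setminus\{0\}$. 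Moreover, since $(\kappa+|\xi|)^{p-2}$ is bounded above and below by positive constants on $K$, assumption~(A) (respectively~(B)) together with the continuity of $\Hess\,(B\circ H)$ on $K$ gives $0<\gamma'\,\mathrm{Id}\le\Hess\,(B\circ H)(\xi)\le\Gamma'\,\mathrm{Id}$ for all $\xi\in K$. Hence, on $B_R(x)$, $u$ is a weak solution of $\dive\big(A(\nabla u)\big)+F'(u)=0$ with $A(\xi):=B'(H(\xi))\nabla H(\xi)\in C^{2,\beta}(K;\R^n)$ and $DA=\Hess\,(B\circ H)$ uniformly elliptic.

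\emph{Bootstrap.} Since $u\in C^{1,\alpha}_\loc$ by Proposition~\ref{regprop1}, a difference-quotient argument on the uniformly elliptic divergence-form equation (as in Proposition~1 of~\cite{T84}) gives $u\in W^{2,2}(B_{R'}(x))$ for $R'<R$; then the coefficients $a_{ij}(\cdot):=[\Hess\,(B\circ H)(\nabla u(\cdot))]_{ij}$ and the right-hand side $-F'(u)$ are of class $C^{0,\alpha}$, so interior Schauder estimates for the linear non-divergence equation $a_{ij}D_{ij}u=-F'(u)$ (see e.g.~\cite{LU68}) yield $u\in C^{2,\alpha}_\loc(B_{R'}(x))$. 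Differentiating~\eqref{eleq} in the direction $e_k$, the function $v:=\partial_ku$ weakly solves the linear, uniformly elliptic, divergence-form equation $\dive\big(\Hess\,(B\circ H)(\nabla u)\nabla v\big)=-F''(u)\,v$, whose coefficients are now of class $C^{1,\alpha'}$ (being the composition of the $C^{1,\beta}$ field $\Hess\,(B\circ H)$ with $\nabla u\in C^{1,\alpha}$) and whose datum is Hölder continuous; Schauder theory then gives $v\in C^{2,\alpha'}_\loc$, that is $u\in C^{3,\alpha'}(B_{R''}(x))$ for suitable $R''>0$ and $\alpha'\in(0,1)$. Since $\{\nabla u\neq0\}$ is open and is covered by balls of this kind, $u\in C^3(\{\nabla u\neq0\})$.

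\emph{The case of assumption~(ii).} If~(B) holds, the argument above no longer requires $\nabla u(x)\neq0$: by~\eqref{gradest} the gradient $\nabla u$ takes values in a fixed ball $\overline{B_M}$, on which, by~(B), $B\circ H\in C^{3,\beta}$ and $\Hess\,(B\circ H)$ is uniformly elliptic; running the same bootstrap at every point of $\R^n$ gives $u\in C^{3,\alpha}_\loc(\R^n)$.

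The routine but somewhat delicate point is the bookkeeping in the bootstrap step — securing the $W^{2,2}$ starting regularity for the weak solution in case~(i) so that the equation may legitimately be differentiated, and keeping track of which Hölder exponent survives each composition and product — whereas everything else is a standard application of the classical elliptic theory, once the degeneracy at $\nabla u=0$ has been circumvented.
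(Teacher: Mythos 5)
Your proposal is correct and follows the same strategy as the paper: localize away from $\{\nabla u=0\}$ (or, under~(B), avoid degeneracy altogether) so that on the relevant neighborhood the equation is uniformly elliptic with $C^{2,\beta}$ structure, and then invoke classical quasilinear elliptic regularity. The paper is more compact — it simply cites Theorem~6.4 (case~(i)) and Theorem~6.3 (case~(ii)) of Ladyzhenskaya--Uraltseva~\cite{LU68}, rather than carrying out the $W^{2,2}$/Schauder bootstrap by hand as you do — but the underlying content is the same.
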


\begin{proof}
If~$u$ satisfies~$(i)$ of Theorem~\ref{gradpest} and~$x$
is as in the statement, then we may apply Theorem~6.4 on
page~284 of~\cite{LU68} in some neighborhood of~$x$ contained
in~$\{ \nabla u \ne 0 \}$, which exists due to the
continuity of~$\nabla u$ granted by Proposition~\ref{regprop1},
to obtain the thesis.

The same result also holds if condition~$(ii)$ is valid,
relying instead on Theorem~6.3
on page~283 of~\cite{LU68}. Note that, in this case,
the non-degeneracy of~$\nabla u$ is no longer required,
obtaining that~$u$ is actually of class~$C^{3,\alpha}_\loc$ on the whole of~$\R^n$.
\end{proof}

\section{$P$-function computations}\label{S:3}

Now we perform a $P$-function argument, by showing
that a suitable energy functional is a subsolution
of a partial differential equation (in fact, it is a solution, with a remainder term which has a sign).
Classical computations of this kind are in~\cite{P76, S81}.

For the sake of briefness, in the following we will often adopt the notation~$H = H(\nabla u)$, $H_i = (\partial_i H) (\nabla u)$, $B= B(H(\nabla u))$, $B'= B'(H(\nabla u))$, etc.

\begin{proposition}\label{SP:SP}
Let~$u$ be as in Theorem~\ref{gradpest}. Set
\begin{eqnarray}
& G(r) := c_u - F(r) \mbox{ for any } r \in \R, \\ \label{Gdef}
& a_{i j} := B'' H_i H_j + B' H_{i j}, \qquad \qquad d_{i j} := a_{i j} / H,  \label{addef}
\end{eqnarray}
and
\bequ \label{Pdef}
P(u; x) := B'(H(\nabla u(x))) H(\nabla u(x)) - B(H(\nabla u(x))) - G(u(x)).
\equ
Then,
\bequ \label{Pine}
{(d_{i j} P_i)}_j - b_k P_k ={\mathscr{R}}
\ge 0 \mbox{ on } \{ \nabla u 
\ne 0 \},
\equ
where
\begin{equation}\label{9090}\begin{split}
&b_k := \frac{B'''}{B''} H^{-2} H_\ell P_\ell H_k + \left[ 
\frac{B'''}{B''} 
+ \frac{B''}{B'} \right] G' H^{-1} H_k + \left[ \frac{B' B'''}{{(B'')}^2} 
+ 1 \right] H^{-2} H_{k \ell} P_\ell\\
{\mbox{and }}\ &{\mathscr{R}}:=
B' B'' H_{i j} H_{k \ell} u_{i k} u_{j \ell}.\end{split}\end{equation}
\end{proposition}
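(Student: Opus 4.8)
The plan is a direct computation on the open set $\{\nabla u \ne 0\}$, where Proposition~\ref{regprop2} grants $u \in C^3$; since $H$ is of class $C^3$ away from the origin and $H(\nabla u) > 0$ there, every expression below is defined and of class $C^1$. I write $v := H(\nabla u)$ and $v_k := \partial_k v = H_\ell u_{\ell k}$, keeping the paper's convention that $H_i$, $H_{ij}$, $H_{ijk}$ denote the partials of $H$ evaluated at $\nabla u$; thus the $H$ appearing in~\eqref{9090} is $v$. With $b(t) = B'(t) t - B(t)$ as in~\eqref{bdef} and $b'(t) = B''(t) t$ (see the proof of Lemma~\ref{blem}), one has $P(u;x) = b(v) - G(u)$ and
\begin{equation*}
P_k = B'' v\, v_k - G'(u)\, u_k .
\end{equation*}
Throughout I use the identities of Lemma~\ref{homain} at $\xi = \nabla u$, namely $H_i u_i = v$, $H_{ij} u_i = 0$ and $H_{ijk} u_i = -H_{jk}$, together with the chain-rule form of~\eqref{eleq}, i.e. $a_{ij} u_{ij} = G'(u)$, which holds because $a_{ij} = B'' H_i H_j + B' H_{ij}$ is exactly $[\Hess(B\circ H)](\nabla u)$.

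First I would extract two consequences of the formula for $P_k$, obtained by contracting it with $H_k$ and with $H_{jk}$ and invoking the identities above:
\begin{equation*}
H_i v_i = \frac{H_k P_k + G' v}{B'' v}, \qquad H_{ij} v_i = \frac{H_{ij} P_i}{B'' v} ,
\end{equation*}
which let me trade second derivatives of $u$ for first derivatives of $P$. Next I would differentiate~\eqref{eleq} in $x_k$: with $a_{ijm}$ the third partials of $B\circ H$ at $\nabla u$, so that $a_{ijm} = B''' H_i H_j H_m + B''(H_{im} H_j + H_{jm} H_i + H_{ij} H_m) + B' H_{ijm}$ (totally symmetric), this gives
\begin{equation*}
a_{ij} u_{ijk} = G''(u) u_k - a_{ijm} u_{mk} u_{ij},
\end{equation*}
which is the one device used to remove third-order derivatives of $u$.

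Then I would expand $d_{ij} P_i = \tfrac{1}{v} a_{ij} P_i = \tfrac{B''}{v} H_j H_i P_i + \tfrac{B'}{v} H_{ij} P_i$ and differentiate in $x_j$ by the product rule, sorting the outcome into: (a) terms from differentiating the scalar prefactors $B''/v$, $B'/v$ and the factors $B'' v$ and $G'(u)$ inside $P_i$, all of which are linear in $P_\ell$; (b) terms from differentiating $H_j(\nabla u)$ and $H_{ij}(\nabla u)$, which introduce $H_{jm} u_{mk}$ and $H_{ijm} u_{mk}$; and (c) the term from differentiating $v_i$ through $\partial_j v_i = H_{\ell m} u_{mj} u_{\ell i} + H_\ell u_{\ell ij}$, which is the only source of third-order derivatives. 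The resulting third-order combination $B'' H_\ell(a_{ij} u_{ij\ell})$ is then eliminated via the differentiated Euler--Lagrange identity together with $H_\ell u_\ell = v$, after which no $u$-derivative of order three survives. Contracting all the remaining $H$-derivatives against $u_i$ and against $\nabla u$ by means of Lemma~\ref{homain}, and substituting the two relations for $H_i v_i$ and $H_{ij} v_i$ to turn the second derivatives of $u$ into $\nabla P$, the surviving terms split into three classes: a multiple of $P_k$ which, on collecting all such contributions, is verified to equal precisely the $b_k$ of~\eqref{9090} (the appearance of the coefficients $B'''/B''$, $B''/B'$ and $B' B'''/(B'')^2$ being traced back to differentiating $B''/v$ and $B'/v$, to the $B'''$-part of $a_{ijm}$, and to the substitutions just made); a batch of terms that cancels identically; and a single leftover term quadratic in $\Hess u$, equal to $B' B'' H_{ij} H_{k\ell} u_{ik} u_{j\ell} = \mathscr{R}$. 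This yields the identity in~\eqref{Pine}.

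Finally, $\mathscr{R} \ge 0$ on $\{\nabla u \ne 0\}$: under either~(A) or~(B) the matrix $\Hess(B\circ H)$ is positive definite on $\R^n\setminus\{0\}$, so $B$ and $H$ fall under the hypotheses of Proposition~\ref{corpos}; applying~\eqref{EQ0} with the symmetric matrix $c_{ik} := u_{ik}(x)$ at the point $\xi := \nabla u(x)$ gives $H_{ij} H_{k\ell} u_{ik} u_{j\ell} \ge 0$, and since $B'(v), B''(v) > 0$ by~\eqref{Bsign} (as $v = H(\nabla u) > 0$) we conclude $\mathscr{R} \ge 0$. I expect the main obstacle to be the bookkeeping in steps~(a)--(c): the computation is long and the $B$-coefficients are delicate, so the real work is to organize it so that the third-order terms cancel transparently and the first-order remainder collapses to exactly the stated $b_k$.
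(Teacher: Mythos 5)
Your outline follows essentially the same path as the paper's proof: compute $P_k = B''HH_\ell u_{\ell k} - G'u_k$, eliminate third-order derivatives of $u$ by differentiating $a_{ij}u_{ij}=G'$ (the paper phrases this as the commutation identity $(a_{ij}u_{ki})_j = (a_{ij}u_{ij})_k = G''u_k$, which is equivalent to your $a_{ij}u_{ijk} = G''u_k - a_{ijm}u_{mk}u_{ij}$), use the inversion of $P_i$ to trade contractions of $\Hess u$ against $\nabla H$ for $\nabla P$, and finally apply Proposition~\ref{corpos} with $c_{ij}=u_{ij}$ together with $B',B''>0$ to conclude $\mathscr{R}\ge 0$. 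All the auxiliary identities you quote are correct, so the plan would close, though the proposal stops short of carrying out the bookkeeping that the paper does in full.
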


\begin{proof}
First of all, we point out that, by Proposition~\ref{regprop2},~$u$ is~$C^3(\{\nabla u \ne 0\})$. We will therefore implicitly assume every calculation to be performed on~$\{\nabla u \ne 0\}$. 
The computation is quite long and somehow delicate, but we
provide full details of the argument for the facility of the reader.
By differentiating~\eqref{Pdef}, we get for any $i\in\{1,\dots,n\}$
\bequ \label{Pi}
P_i = B'' H H_k u_{k i} + B' H_k u_{k i} - B' H_k u_{k i} - G' u_i = B'' H H_k u_{k i} - G' u_i.
\equ
Thus, recalling~\eqref{addef},
\bequ \label{form1}
\begin{split}
{(d_{i j} P_i)}_j & = {(B'' H H_k d_{i j} u_{k i})}_j - {(G' d_{i j} u_i)}_j \\
& = {(B'' H_k)}_j a_{i j} u_{k i} + B'' H_k {(a_{i j} u_{k i})}_j - {(G' d_{i j} u_i)}_j.
\end{split}
\equ
Next, observe that from~\eqref{eleq} we have
\bequ \label{eleq2}
a_{i j} u_{i j} = G'.
\equ
Being~$u$ of class~$C^3$, we compute for any~$k$
\bequ
\begin{split}
& {(a_{i j} u_{k i} )}_j - {(a_{i j} u_{i j} )}_k = {(a_{i j})}_j u_{k i} - {(a_{i j})}_k u_{i j} \\
& \qquad = \left[ B''' H_i H_j H_\ell + B'' H_{i \ell} H_j + B'' H_i H_{j \ell} + B'' H_{i j} H_\ell + B' H_{i j \ell} \right] u_{j \ell} u_{k i} \\
& \qquad - \, \left[ B''' H_i H_j H_\ell + B'' H_{i \ell} H_j + B'' H_i H_{j \ell} + B'' H_{i j} H_\ell + B' H_{i j \ell} \right] u_{k \ell} u_{i j} = 0,
\end{split}
\equ
by interchanging the indices~$i$ and~$\ell$ in the last term. Therefore, using~\eqref{eleq2} we obtain
$$
{(a_{i j} u_{k i} )}_j = {(a_{i j} u_{i j} )}_k = {(G')}_k = G'' u_k.
$$
Plugging this into~\eqref{form1} we have
\bequ \label{form2}
\begin{split}
{(d_{i j} P_i)}_j & = {(B'' H_k)}_j a_{i j} u_{k i} + B'' H_k G'' u_k - {(G' d_{i j} u_i)}_j \\
& = {(B'' H_k)}_j a_{i j} u_{k i} + B'' H_k G'' u_k - G'' d_{i j} u_i u_j - G' {(d_{i j} u_i)}_j.
\end{split}
\equ
Now, we collect the two terms containing~$G''$, getting, by~\eqref{i} and~\eqref{ii},
\begin{equation*}
\begin{split}
B'' H_k G'' u_k - G'' d_{i j} u_i u_j & = G'' H^{-1} \left[ B'' H H_k u_k - a_{i j} u_i u_j \right] \\
& = G'' H^{-1} \left[ B'' H^2 - B'' H_i H_j u_i u_j - B' H_{i j} u_i u_j \right] \\
& = G'' H^{-1} \left[ B'' H^2 - B'' H^2 - 0 \right] = 0.
\end{split}
\end{equation*}
Hence,~\eqref{form2} becomes
\bequ \label{form3}
\begin{split}
{(d_{i j} P_i)}_j & = {(B'' H_k)}_j a_{i j} u_{k i} - G' {(d_{i j} u_i)}_j \\
& = {(B'' H_k)}_j a_{i j} u_{k i} - G' {(d_{i j})}_j u_i - G' d_{i j} u_{i j} \\
& = {(B'' H_k)}_j a_{i j} u_{k i} - G' {(d_{i j})}_j u_i - {(G')}^2 H^{-1},
\end{split}
\equ
where in the last line we made use of~\eqref{eleq2}. Appealing to~\eqref{i},~\eqref{ii} and~\eqref{iii}, we compute
\bequ \label{form4}
\begin{split}
{(d_{i j})}_j u_i & = {\left[ B'' H^{-1} H_i H_j + B' H^{-1} H_{i j} \right]}_j u_i \\
& = \left[ B''' H^{-1} H_i H_j H_\ell - B'' H^{-2} H_i H_j H_\ell + B'' H^{-1} H_{i \ell} H_j + B'' H^{-1} H_i H_{j \ell} \right. \\
& \quad \left. + B'' H^{-1} H_{i j} H_\ell - B' H^{-2} H_{i j} H_\ell + B' H^{-1} H_{i j \ell} \right] u_{j \ell} u_i \\
& = \left[ B''' H_j H_\ell - B'' H^{-1} H_j H_\ell + 0 + B'' H_{j \ell} + 0 - 0 - B' H^{-1} H_{j \ell} \right] u_{j \ell} \\
& = \left[ B''' - B'' H^{-1} \right] H_j H_\ell u_{j \ell} + \left[ B'' - B' H^{-1} \right] H_{j \ell} u_{j \ell}.
\end{split}
\equ
Writing explicitly~\eqref{eleq2}
$$
G' = a_{i j} u_{i j} = B'' H_i H_j u_{i j} + B' H_{i j} u_{i j},
$$
we deduce
\bequ \label{eleq2inv}
H_{i j} u_{i j} = {(B')}^{-1} \left[ G' - B'' H_i H_j u_{i j} \right].
\equ
By this equation,~\eqref{form4} becomes
\bequ \label{form5}
\begin{split}
{(d_{i j})}_j u_i & = \left[ B''' - B'' H^{-1} \right] H_j H_\ell u_{j \ell} + {(B')}^{-1} \left[ B'' - B' H^{-1} \right] \left[ G' - B'' H_j H_\ell u_{j \ell} \right] \\
& = \left[ B''' - {(B')}^{-1} {(B'')}^2 \right] H_j H_\ell u_{j \ell} + G' {(B')}^{-1} \left[ B'' - B' H^{-1} \right] .
\end{split}
\equ
Now, inverting~\eqref{Pi}, we get
\bequ \label{Piinv}
H_k u_{k i} = {( B'' H )}^{-1} \left[ P_i + G' u_i \right].
\equ
Exploiting~\eqref{Piinv} in~\eqref{form5} and using~\eqref{i}, we obtain
\begin{equation*}
\begin{split}
{(d_{i j})}_j u_i & = {( B'' H )}^{-1} \left[ B''' - {(B')}^{-1} {(B'')}^2 \right] \left[ P_\ell + G' u_\ell \right] H_\ell + G' {(B')}^{-1} \left[ B'' - B' H^{-1} \right] \\
& = H^{-1} \left[ {( B'' )}^{-1} B''' - {(B')}^{-1} B'' \right] \left[ P_\ell + G' u_\ell \right] H_\ell + G' {(B')}^{-1} \left[ B'' - B' H^{-1} \right] \\
& = H^{-1} \left[ {( B'' )}^{-1} B''' - {(B')}^{-1} B'' \right]  H_\ell P_\ell + G' \left[ {( B'' )}^{-1} B''' - H^{-1} \right].
\end{split}
\end{equation*}
By this last equality,~\eqref{form3} becomes
\begin{align}
\nonumber
{(d_{i j} P_i)}_j & = {(B'' H_k)}_j a_{i j} u_{k i} - G' H^{-1} \left[ {( B'' )}^{-1} B''' - {(B')}^{-1} B'' \right]  H_\ell P_\ell \\ \label{form6}
& \quad - {(G')}^2 \left[ {( B'' )}^{-1} B''' - H^{-1} \right] - {(G')}^2 H^{-1} \\ \nonumber
& = {(B'' H_k)}_j a_{i j} u_{k i} - G' H^{-1} \Big[ {( B'' )}^{-1} B''' - {(B')}^{-1} B'' \Big]  H_\ell P_\ell - {(G')}^2 {( B'' )}^{-1} B'''.
\end{align}
Now, we use~\eqref{Piinv} to write, for any~$j$ and~$k$,
\begin{equation*}
\begin{split}
{(B'' H_k)}_j & = B''' H_k H_\ell u_{j \ell} + B'' H_{k \ell} u_{j \ell} \\
& = B''' H_k {( B'' H )}^{-1} \left[ P_j + G' u_j \right] + B'' H_{k \ell} u_{j \ell} \\
& = {( B'' )}^{-1} B''' H^{-1} H_k P_j + G' {( B'' )}^{-1} B''' H^{-1} H_k u_j  + B'' H_{k \ell} u_{j \ell},
\end{split}
\end{equation*}
and
\begin{equation*}
\begin{split}
a_{i j} u_{i k} & = \left[ B'' H_i H_j + B' H_{i j} \right] u_{i k} = B'' H_i H_j u_{i k} + B' H_{i j} u_{i k} \\
& = B'' H_j {( B'' H )}^{-1} \left[ P_k + G' u_k \right] + B' H_{i j} u_{i k} \\
& = H^{-1} H_j P_k + G' H^{-1} H_j u_k + B' H_{i j} u_{i k}.
\end{split}
\end{equation*}
We put together the two formulae just obtained, getting
\begin{equation*}
\begin{split}
{(B'' H_k)}_j a_{i j} u_{k i} & = \left[ {( B'' )}^{-1} B''' H^{-1} H_k P_j + G' {( B'' )}^{-1} B''' H^{-1} H_k u_j  + B'' H_{k \ell} u_{j \ell} \right] \\
& \quad \times \left[ H^{-1} H_j P_k + G' H^{-1} H_j u_k + B' H_{i j} u_{i k} \right] \\
& = {( B'' )}^{-1} B''' H^{-2} H_k P_j H_j P_k + G' {( B'' )}^{-1} B''' H^{-2} H_k P_j H_j u_k \\
& \quad + B' {( B'' )}^{-1} B''' H^{-1} H_k P_j H_{i j} u_{i k} + G' {( B'' )}^{-1} B''' H^{-2} H_k u_j H_j P_k \\
& \quad + {(G')}^2 {( B'' )}^{-1} B''' H^{-2} H_k u_j H_j u_k + G' B' {( B'' )}^{-1} B''' H^{-1} H_k u_j H_{i j} u_{i k} \\
& \quad + B'' H^{-1} H_{k \ell} u_{j \ell} H_j P_k + G' B'' H^{-1} H_{k \ell} u_{j \ell} H_j u_k + B' B'' H_{k \ell} u_{j \ell} H_{i j} u_{i k}.
\end{split}
\end{equation*}
Making use of~\eqref{Piinv},~\eqref{i} and~\eqref{ii}, this becomes
\begin{equation*}
\begin{split}
{(B'' H_k)}_j a_{i j} u_{k i} & = {( B'' )}^{-1} B''' H^{-2} {(H_\ell P_\ell)}^2 + G' {( B'' )}^{-1} B''' H^{-1} H_\ell P_\ell \\
& \quad + B' {( B'' )}^{-1} B''' H^{-1} P_j H_{i j} {( B'' H )}^{-1} \left[ P_i + G' u_i \right] + G' {( B'' )}^{-1} B''' H^{-1} H_\ell P_\ell \\
& \quad + {(G')}^2 {( B'' )}^{-1} B''' + 0 \\
& \quad + B'' H^{-1} H_{k \ell} P_k {( B'' H )}^{-1} \left[ P_\ell + G' u_\ell \right] + 0 + B' B'' H_{i j} H_{k \ell} u_{i k} u_{j \ell}.
\end{split}
\end{equation*}
Developing the products and exploiting again~\eqref{ii}, we have
\begin{equation*}
\begin{split}
{(B'' H_k)}_j a_{i j} u_{k i} & = {( B'' )}^{-1} B''' H^{-2} {(H_\ell P_\ell)}^2 + G' {( B'' )}^{-1} B''' H^{-1} H_\ell P_\ell \\
& \quad + B' {( B'' )}^{-2} B''' H^{-2} H_{i j} P_i P_j + 0 + G' {( B'' )}^{-1} B''' H^{-1} H_\ell P_\ell \\
& \quad + {(G')}^2 {( B'' )}^{-1} B''' + H^{-2} H_{k \ell} P_k  P_\ell + 0 + B' B'' H_{i j} H_{k \ell} u_{i k} u_{j \ell}.
\end{split}
\end{equation*}
Simplifying and collecting similar terms, we get
\begin{equation*}
\begin{split}
{(B'' H_k)}_j a_{i j} u_{k i} & = {( B'' )}^{-1} B''' H^{-2} {(H_\ell P_\ell)}^2 + 2 G' {( B'' )}^{-1} B''' H^{-1} H_\ell P_\ell + B' B'' H_{i j} H_{k \ell} u_{i k} u_{j \ell} \\
& \quad + H^{-2} \left[ B' {( B'' )}^{-2} B''' + 1 \right] H_{k \ell} P_k  P_\ell + {(G')}^2 {( B'' )}^{-1} B'''.
\end{split}
\end{equation*}
Plugging this into~\eqref{form6} we finally obtain
\begin{equation*}
\begin{split}
{(d_{i j} P_i)}_j & = {( B'' )}^{-1} B''' H^{-2} {(H_\ell P_\ell)}^2 + 2 G' {( B'' )}^{-1} B''' H^{-1} H_\ell P_\ell + B' B'' H_{i j} H_{k \ell} u_{i k} u_{j \ell} \\
& \quad + H^{-2} \left[ B' {( B'' )}^{-2} B''' + 1 \right] H_{k \ell} P_k  P_\ell + {(G')}^2 {( B'' )}^{-1} B''' \\
& \quad - G' H^{-1} \left[ {( B'' )}^{-1} B''' - {(B')}^{-1} B'' \right]  H_\ell P_\ell - {(G')}^2 {( B'' )}^{-1} B''' \\
& = {( B'' )}^{-1} B''' H^{-2} {(H_\ell P_\ell)}^2 + G' H^{-1} \left[ {( B'' )}^{-1} B''' + {(B')}^{-1} B'' \right]  H_\ell P_\ell \\
& \quad + H^{-2} \left[ B' {( B'' )}^{-2} B''' + 1 \right] H_{k \ell} P_k  P_\ell + B' B'' H_{i j} H_{k \ell} u_{i k} u_{j \ell}.
\end{split}
\end{equation*}
The last term of the formula above coincides with the 
remainder~${\mathscr{R}}$
as defined in~\eqref{9090} and it
is non-negative by \eqref{Bsign} and 
via an application of Proposition~\ref{corpos} with~$c_{i j} := u_{i j}$.
Therefore, inequality~\eqref{Pine} is proved.
\end{proof}

\section{Proof of Theorem \ref{gradpest}}\label{S:4}

The proof is similar to the one of Theorem~1 in~\cite{FV13},
with the following modification. Formula~(32) of~\cite{FV13}
is replaced here with~\eqref{Pine}, on which we apply
the classical Maximum Principle.

\section{Proof of Theorem \ref{1D}}\label{S:4.4}

Up to a rotation and a translation, we may consider the origin
lying in a level set~$\{ u=c\}$,
with
\begin{equation}\label{7777}
\nabla u(0)=|\nabla u(0)|\,(0,\dots,0,1)\ne0.\end{equation}
We stress that the equation is not invariant under a rotation~$R$,
but the function~$H$ would be replaced by~$\tilde H:=H\circ R$.
Nevertheless, the new function~$\tilde H$
satisfies the same structural assumptions
of~$H$, thus we take the freedom of identifying~$\tilde H$ with
the original~$H$.

We parameterize the level set of~$u$ near the origin
with the graph of a $C^2$ function~$\phi$, i.e. we 
write~$u(x',\phi(x'))=c$
for~$x'\in\R^{n-1}$ near the origin.
By taking two derivatives, we obtain that
\begin{eqnarray*}
&& u_{i'}+u_n\phi_{i'}=0\\
{\mbox{and }}&&u_{i'j'}+u_{i'n}\phi_{j'}
+u_{j'n}\phi_{i'}+u_{nn}\phi_{i'}\phi_{j'}+u_n\phi_{i'j'}=0,
\end{eqnarray*}
for any~$i',j'\in\{1,\dots,n-1\}$,
where the derivatives of~$u$ are evaluated at~$(x',\phi(x'))$
and the derivatives of~$\phi$ are evaluated at~$x'$.
In particular, by taking~$x'=0$, we obtain that~$\phi_{i'}(0)=0$
and~$u_{i'j'}(0)=-u_{n}(0)\phi_{i'j'}(0)$,
for any~$i',j'\in\{1,\dots,n-1\}$.

Consequently, we have that all the 
principal curvatures of the level set at~$0$ vanish if and only if~$
\phi_{i'j'}(0)=0$ for any~$i',j'\in\{1,\dots,n-1\}$, and so,
by~\eqref{7777},
if and only if
\begin{equation}\label{8888}
{\mbox{$u_{i'j'}(0)=0$ for any~$i',j'\in\{1,\dots,n-1\}$.}}
\end{equation}
Hence, we establish~\eqref{8888} in order to complete the
proof of Theorem~\ref{1D}. The proof of~\eqref{8888}
is based on Proposition~\ref{corpos}.
We need to check that the hypotheses of Proposition~\ref{corpos}
are satisfied in this case.
First of all, we have that~\eqref{EQ1} is guaranteed by~\eqref{7777}
(here~$\xi=\nabla u(0)$). Then, by Theorem~\ref{gradpest},
we know that~\eqref{gradpideq2} holds true in the whole
connected component~${\mathscr{S}}$ that contains~$0$.
As a consequence, $P$ vanishes identically in~${\mathscr{S}}$,
thus we obtain from~\eqref{Pine} and~\eqref{9090} that
$$ 0=
{(d_{i j} P_i)}_j - b_k P_k ={\mathscr{R}}
= B' B'' H_{i j} H_{k \ell} u_{i k} u_{j \ell}.$$
This says
that equality holds in~\eqref{EQ0} with~$\xi=\nabla u(0)$
and~$c_{ij}=u_{ij}$. Accordingly, the hypotheses
of Proposition~\ref{corpos} are fulfilled and we obtain~\eqref{8888}
from~\eqref{2.14a}.
The proof of Theorem~\ref{1D}
is therefore complete.

\section{Proof of Theorem \ref{1DAUX2}}\label{S:AUX2}

In this case $u$ is of class $C^3$ everywhere, due to
Proposition~\ref{regprop2},
therefore
we can differentiate~\eqref{eleq} and write it
in non-divergence form as
$$ (B\circ H)_{ij} u_{ij}+F'(u)=0.$$
Notice that the matrix~$\{ (B\circ H)_{ij} \}_{i,j\in\{1,\dots,n\} }$
is elliptic since, by assumption~(B),
\begin{equation} \label{ELL1}
(B\circ H)_{ij}\xi_i\xi_j \ge \gamma|\xi|^2.
\end{equation}
Moreover, by Lemma~\ref{der2B0},~$B$ is of class~$C^2$ at the origin with
\begin{equation} \label{ELL2}
B''(0) > 0.
\end{equation}
Notice in particular that the last inequality follows combining~\eqref{der2B0th} and~\eqref{ELL1}.
%To check this, we fix~$e\in S^{n-1}$ and we apply~\eqref{ELL1} with~$\xi:=te$, for~$t>0$:
%by~\eqref{i} and~\eqref{ii} we obtain
%\begin{equation*}
%\begin{split}
%\gamma t^2 & \le 
%\left[ B''(H(\xi)) H_i(\xi) H_j(\xi) + B'(H(\xi)) H_{i j}(\xi) \right]\xi_i\xi_j
%\\ & = B''(H(\xi)) \Big(H_i(\xi)\xi_i\Big)^2+0\\
%& = B''(H(te)) (H(te))^2\\
%& = t^2 B''(tH(e)) (H(e))^2.
%\end{split}
%\end{equation*}
%Dividing by~$t^2$ and passing to the
%limit
%\footnote{Notice that here we made implicitly use of the
%fact that~$B$ is of class~$C^2$ at the origin, which easily
%follows from the regularity assumptions in hypothesis~(B).}, we
%get~\eqref{ELL2}.

Now, we observe that, in view of Theorem~\ref{1D},~$u$ is one-dimensional
and that its profile~$u_0$ satisfies the ordinary
differential equation~\eqref{ODE} on an interval. Also recall that~$u$, and consequently~$u_0$, is bounded, with bounded gradient.

Thanks to~\eqref{ELL2},
the linearized equation can be represented as a first order system of ODEs in canonical form,
so that~$u_0$ extends to a global solution~$\hat{u}_0$
by the standard theory for Cauchy problems
with globally Lipschitz nonlinearities (see e.g. page~146 in~\cite{PSV84}).
%(see e.g. page~17 in~\cite{T06}).

Finally, by the Unique Continuation Principle (see e.g.~\cite{H05}), we have
that~$u$
agrees everywhere with the one-dimensional extension of~$\hat{u}_0$ to~$\R^n$.
This concludes the proof.

\section{Proof of Theorem \ref{1DAUX}}\label{S:AUX}

Assume~$\S$ to be any connected component of~$\{ \nabla u \ne 0 \}$. We claim that
\begin{equation} \label{4.1}
\begin{split}
& \mbox{$\S$ is foliated by level sets of $u$ which are union of parallel affine hyperplanes} \\
& \mbox{and so $\partial \S$ is the union of (at most two) parallel hyperplanes.} 
\end{split}
\end{equation}
In order to prove this, fix~$x_\star \in \S$ and consider the level set~$S_{x_\star} := \{ u = u(x_\star) \}$. Notice that
\bequ \label{4.2}
S_{x_\star} \subseteq \{ \nabla u \ne 0 \}.
\equ
Indeed, if~$x \in S_{x_\star}$, then from~\eqref{gradpideq2} we deduce that
\begin{multline} \label{4.3}
B'(H(\nabla u(x))) H(\nabla u(x)) - B(H(\nabla u(x))) = c_u - F(u(x)) \\
= c_u - F(u(x_\star)) = B'(H(\nabla u(x_\star))) H(\nabla u(x_\star)) - B(H(\nabla u(x_\star))) > 0,
\end{multline}
because~$x_\star$ is in~$\{ \nabla u \ne 0 \}$ and so we can apply~\eqref{bpos}
taking~$t := H(\nabla u(x_\star)) > 0$. But then, also~$x \in \{\nabla u \ne 0\}$,
since otherwise~$H(\nabla u)$ would vanish, in contradiction with~\eqref{4.3}.
This establishes~\eqref{4.2} so that we are allowed to apply Theorem~\ref{1D},
concluding that every connected component of~$S_{x_\star}$ is contained in a hyperplane, say~$\ell_{x_\star}$. In particular, we point out that
\bequ \label{4.4}
\mbox{the connected component of~$S_{x_\star}$ which contains~$x_\star$ is equal to~$\ell_{x_\star}$.}
\equ
Indeed,~$S_{x_\star}$ is closed in the relative topology of~$\ell_{x_\star}$, being $u$ continuous.
Furthermore,~$S_{x_\star}$ is also relatively open, by~\eqref{4.2} and applying Theorem~\ref{1D}
together with the Implicit Function Theorem. Thus,~\eqref{4.4} holds true.

Combining~\eqref{4.4} and~\eqref{4.2} we immediately obtain~\eqref{4.1}.

Let now~$\omega$ denote a vector normal to all the hyperplanes in~\eqref{4.1}. We claim that
\bequ \label{4.5}
u(x_0) = u(y_0) \mbox{ if } (x_0 - y_0) \cdot \omega = 0.
\equ
To check this, fix~$x_0 \in \R^n$. If~$\nabla u = 0$ on
the whole~$\ell_{x_0}$, then~\eqref{4.5} follows from the Fundamental Theorem of Calculus.
Conversely, let~$x_\sharp$ be a point in~$\ell_{x_0} \cap \{ \nabla u \ne 0 \}$.
By~\eqref{4.1} (applied to~$x_\sharp$), we have that~$u$ is constant on~$\ell_{x_\sharp}$, which,
in turn, is equal to~$\ell_{x_0}$. Thus,~\eqref{4.5} is proved and so is the desired one-dimensional Euclidean symmetry.

\section{Proof of Theorem \ref{liou}}\label{S:7}

Let~$r := u(x_0)$ and fix a point~$x \in \R^n \setminus \{ x_0 \}$. In order to establish the thesis of Theorem~\ref{liou}, we shall show that~$u(x) = r$. Consider the~$C^1$ function~$\varphi$, defined by setting
$$
\varphi(t) := u(t x + (1 - t) x_0) - r \mbox{ for any } t \in [0, 1].
$$
In the following we will sometimes adopt the short notation~$x_t := t x + (1 - t) x_0$. Notice that, by the regularity of~$u$, the function~$t \mapsto |\nabla u(t x + (1 - t) x_0)|$ is bounded on~$[0, 1]$. We may therefore apply Lemma~\ref{blem2} (and also recall the notation there introduced) to compute
\begin{equation*}
\begin{split}
{|\dot{\varphi}(t)|}^{p^*} & \le {|x - x_0|}^{p^*} {|\nabla u(t x + (1 - t) x_0)|}^{p^*} \\
& = {|x - x_0|}^{p^*} {|\nabla u(x_t)|}^{p^*} \\
& \le \frac{{|x - x_0|}^{p^*}}{h^{p^*}} \, H^{p^*} \big( \nabla u(x_t) \big) \\
& \le \frac{{|x - x_0|}^{p^*}}{\varepsilon h^{p^*}} \, \left[ B'(H(\nabla u(x_t)) H(\nabla u(x_t)) - B(H(\nabla u(x_t))) \right],
\end{split}
\end{equation*}
for some~$\varepsilon > 0$. Next, recalling~\eqref{gradpesteq} and the assumptions of Theorem~\ref{liou}, we have that
\begin{equation} \label{5.1}
\begin{split}
{|\dot{\varphi}(t)|}^{p^*} & \le \frac{{|x - x_0|}^{p^*}}{\varepsilon h^{p^*}} \, \left[ c_u - F(u(x_t)) \right] \\
& = \frac{{|x - x_0|}^{p^*}}{\varepsilon h^{p^*}} \, \left[ F(r) - F(u(t x + (1 - t) x_0)) \right] \\
& = - \frac{{|x - x_0|}^{p^*}}{\varepsilon h^{p^*}} \int_r^{u(t x + (1 - t) x_0)} F'(\sigma) \, d\sigma.
\end{split}
\end{equation}
Then, we employ alternatively the Lipschitz regularity of~$F'$ or~\eqref{Fgrowliou} to write
$$
|F'(\sigma)| \le c \, {|r - \sigma|}^{p^* - 1} \mbox{ for any } \sigma \in \left[ \inf_{\R^n} u, \sup_{\R^n} u \right],
$$
for some positive constant~$c$. Using this estimate in~\eqref{5.1}, we get
\begin{equation*}
\begin{split}
{|\dot{\varphi}(t)|}^{p^*} & \le \frac{c {|x - x_0|}^{p^*}}{\varepsilon h^{p^*}} \left| \int_r^{u(t x + (1 - t) x_0)} {|r - \sigma|}^{p^* - 1} d\sigma \right| \\
& = \frac{c {|x - x_0|}^{p^*}}{\varepsilon p^* h^{p^*}} \, {|u(t x + (1 - t) x_0) - \sigma|}^{p^*} \\
& = \frac{c {|x - x_0|}^{p^*}}{\varepsilon p^* h^{p^*}} \, {|\varphi(t)|}^{p^*},
\end{split}
\end{equation*}
which yields, if~$\varphi(t) \ne 0$,
$$
\left| \frac{\dot{\varphi}(t)}{\varphi(t)} \right| \le \frac{c^{1/p^*} |x - x_0|}{\varepsilon {p^*}^{1/p^*} h} =: K.
$$
Finally, set~$\psi(t) := {(\varphi(t))}^2 e^{-K t}$, for any~$t \in [0, 1]$. Differentiating~$\psi$, we obtain
\begin{equation*}
\begin{split}
\dot{\psi}(t) & = \varphi(t) e^{-K t} \left[ \dot{\varphi}(t) - K \varphi(t) \right] \\
& =
\left\{
\begin{array}{cc}
{(\varphi(t))}^2 e^{-K t} \left[ \frac{\dot{\varphi}(t)}{\varphi(t)} - K \right] & \mbox{ if } \varphi(t) \ne 0 \\
0 & \mbox{ if } \varphi(t) = 0
\end{array}
\right. \\
& \le 0,
\end{split}
\end{equation*}
so that~$\psi$ is non-increasing. Hence
$$
{(u(x) - r)}^2 e^{-K} = {\varphi(1)}^2 e^{-K} = \psi(1) \le \psi(0) = {\varphi(0)}^2 = {(u(x_0) - r)}^2 = 0,
$$
and therefore~$u(x) = r$, which concludes the proof.

\section{Proof of Theorem \ref{liou2}}\label{S:8}

We will suppose, without loss of generality, that
\bequ \label{uconst}
\mbox{$u$ is not constant.}
\equ
Then, assume by contradiction that there exists~$r_0 \in \big( \inf_{\R^n} u, \sup_{\R^n} u \big)$ such that
$$
\sup \left\{ F(r) : r \in \left[ \inf_{\R^n} u, \sup_{\R^n} u \right] \right\} = c_u = F(r_0).
$$
By the continuity of~$u$, there also exists a point~$x_0 \in \R^n$ such that~$u(x_0) = r_0$. Moreover,~$r_0$ is a local maximum for~$F$, so that~$F'(r_0) = 0$. Thus, we can apply Theorem~$\ref{liou}$, deducing that~$u$ is constant, in contradiction to~\eqref{uconst}.

\appendix
\section{An example in which $H$ is not a norm} \label{notanorm}

Here we present an example in which~$H$ satisfies
the structural assumptions requested in this paper without
being a norm. Indeed~$H$ will be positive homogeneous of degree~$1$,
but not necessarily a norm. More precisely,
given any convex set~$\K$ described as a graph over the sphere by
$$
\K:= \left\{ \frac{t \xi}{\Theta(\xi)}, \ \xi\in S^{n-1}, \ t\in[0,1]
\right\},
$$
with~$\Theta\in C^{3,\beta}_\loc(\R^n\setminus\{0\},\,(0,+\infty))\cap L^\infty_\loc(\R^n)$, and
with 
\begin{equation}\label{p.9}
{\mbox{principal curvatures along $\partial \K$
bounded from below by some $c>0$,}}\end{equation}
we construct an admissible $H$ for which
\begin{equation}\label{H1}
\{ H=1\}=\partial \K.
\end{equation}
Precisely, such~$H$ is defined,
for any~$\xi\in\R^n\setminus\{0\}$, by
\bequ \label{HHdef}
H(\xi):= |\xi|\,\Theta\left( \frac{\xi}{|\xi|}\right).
\equ
Notice that $H$ is not even, unless so is~$\Theta$.
Therefore, in general, $H$ is not a norm. We have that
\begin{align*}
\partial{\K} & = \left\{\zeta\in\R^n : \zeta=\frac{\xi}{\Theta(\xi)}, \, \xi\in S^{n-1} \right\} \\
& = \left\{\zeta\in\R^n :|\zeta|=\frac{1}{\Theta(\xi)}, \, \xi=\frac{\zeta}{|\zeta|}\in S^{n-1} \right\} \\
& = \left\{\zeta\in\R^n : H(\zeta)=1 \right\},
\end{align*}
that is~\eqref{H1}.

Our goal is to show that
\begin{equation} \label{scope}
{\mbox{if }} B(t)=\frac{t^p}{p} {\mbox{ with $p>1$, then assumption (A) holds.}}
\end{equation}
To this end, we first notice that
\begin{equation}\label{poHH}
{\mbox{$H$ is positively homogeneous of degree $1$,}}\end{equation} and so the range of~$\Hess \, (H) (\xi)$
lies in~$\xi^\perp$, thanks to point~$(ii)$ in Lemma~\ref{homain}.
Then we show that
\begin{equation} \label{END}
{\mbox{$\Hess \, (H) (\xi)$ is a positive definite endomorphism on~$\xi^\perp$.}}
\end{equation}
%By homogeneity, it is enough to establish~\eqref{END} when~$|\xi|=1$.
%Thus, we fix $\xi\in S^{n-1}$ and $\eta\in \xi^\perp$ with~$|\eta|=1$
%and we show that
%\begin{equation} \label{END2}
%H_{ij}(\xi)\eta_i\eta_j\ge c.
%\end{equation}
To see this, we make the relation between the second fundamental form of~$\partial \K$
and the Hessian of~$H$ explicit. Although we believe this fact to be well-known to the experts, we still
provide all the details. Let~$\xi \in \R^n \setminus \{ 0 \}$ with~$H(\xi) = 1$
and~$v, w \in T_\xi (\partial \K)$. By indicating with~$\nu = (\nu_1, \ldots, \nu_n)$ the interior
normal of~$\partial \K$ at~$\xi$, we obtain that
the second fundamental form of~$\partial \K$ at~$\xi$ applied to~$v$ and~$w$ equals
$$
\mathrm{I\!I}_\xi(v, w) = - \langle d_\xi \nu(v), w \rangle = - \frac{\partial \nu_j}{\partial \xi_i} v_i w_j.
$$
Since~$\nu (\xi)= - \nabla H(\xi) / |\nabla H(\xi)|$, we compute
$$
\frac{\partial \nu_j}{\partial \xi_i} = - \frac{H_{i j}(\xi)}{|\nabla H(\xi)|} +
\frac{H_i(\xi) H_j(\xi)}{|\nabla H(\xi)|^3}.
$$
Being by definition~$v, w \perp \nabla H(\xi)$, we obtain
\begin{equation}\label{8d89q}
\mathrm{I\!I}_\xi(v, w) = \frac{H_{i j}(\xi)}{|\nabla H(\xi)|} v_i w_j
- \frac{H_i(\xi) H_j(\xi)}{|\nabla H(\xi)|^3} v_i w_j =
\frac{H_{i j}(\xi) v_i w_j}{|\nabla H(\xi)|}.
\end{equation}
Furthermore, by~\eqref{HHdef}
$$
|\xi| = \frac{H(\xi)}{\Theta \left( \frac{\xi}{|\xi|} \right)} =
\frac{1}{\Theta \left( \frac{\xi}{|\xi|} \right)} \le {\left[ \min_{\zeta \in S^{n - 1}} \Theta(\zeta) \right]}^{-1} =: c',
$$
%\begin{eqnarray*}
%&& \nabla H(\xi)\cdot\xi=
%\left.\frac{d}{dt} H((1+t)\xi)\right|_{t=0}=
%\left.\frac{d}{dt}
%\, |(1+t)\xi|\,\Theta\left(\frac{(1+t)\xi}{|(1+t)\xi|}\right)
%\right|_{t=0}\\ &&\qquad=
%\left.\frac{d}{dt}\,
%(1+t)\,|\xi|\Theta\left(\frac{\xi}{|\xi|}\right)  
%\right|_{t=0}=|\xi|\Theta\left(\frac{\xi}{|\xi|}\right)\ge |\xi| \min_{\zeta\in S^{n-1}}\Theta(\zeta),
%\end{eqnarray*}
and so, using~\eqref{poHH}
$$
|\nabla H(\xi)| \ge \frac{1}{|\xi|} \, \nabla H(\xi) \cdot \xi = \frac{H(\xi)}{|\xi|} = \frac{1}{|\xi|} \ge \frac{1}{c'},
$$
Therefore, by~\eqref{p.9} and~\eqref{8d89q}
we conclude that, for any~$\xi\in\R^n\setminus\{0\}$ with~$H(\xi)=1$ and any~$v \in {\nabla H(\xi)}^\perp$,
\begin{equation}\label{d7f7}
H_{i j}(\xi) v_i v_j = |\nabla H(\xi)| \, \mathrm{I\!I}_\xi(v, v) \ge \frac{c}{c'} |v|^2.
\end{equation}
Now, by homogeneity we extend the previous estimate to
any~$\xi \in \R^n \setminus \{ 0 \}$.
For this, fixed any~$\xi\in\R^n\setminus\{0\}$ and any~$v \in {\nabla H(\xi)}^\perp$,
we define~$\tilde\xi:=\xi/H(\xi)$. By~\eqref{poHH}, we have that~$H(\tilde\xi)=1$
and that~$\nabla H(\tilde\xi)=\nabla H(\xi)$. Hence we can apply~\eqref{d7f7}
to~$\tilde\xi$ and~$v\in {\nabla H(\tilde\xi)}^\perp$, and once more the homogeneity in~\eqref{poHH}, obtaining
\begin{equation} \label{geomell.PRE}
H_{i j}(\xi) v_i v_j =H_{i j}\big(H(\xi) \tilde \xi\big) v_i v_j=
(H(\xi))^{-1} H_{i j}(\tilde\xi) v_i v_j
\ge \frac{c}{c'} (H(\xi))^{-1} |v|^2. 
\end{equation}
On the other hand,
$$ H(\xi)\in \Big[ |\xi| \min_{\zeta\in S^{n-1}}\Theta(\zeta),\ |\xi| \max_{\zeta\in S^{n-1}}\Theta(\zeta)\Big].$$
This and~\eqref{geomell.PRE} imply that
\begin{equation} \label{geomell}
H_{i j}(\xi) v_i v_j 
\ge \tilde{c} |\xi|^{-1} |v|^2,
\end{equation}
for some~$\tilde{c} > 0$.
We now complete the proof of~\eqref{END}. We observe that it is enough to prove that,
for any~$\xi, \eta \in S^{n - 1}$, with $\eta \in \xi^\perp$,
\begin{equation} \label{END2}
H_{ij}(\xi)\eta_i\eta_j\ge \tilde{c}.
\end{equation}
Since~$\xi$ is not orthogonal to~$\nabla H(\xi)$, we can write~$\eta =
\alpha \xi + v$, for some~$\alpha \in \R$ and~$v \perp \nabla H(\xi)$.
%We have that
%$$
%1 = |\eta|^2 = \alpha^2 |\xi|^2 + |v|^2 + 2 \alpha \langle v, \xi \rangle
%= \alpha^2 + |v|^2 + 2 \alpha \langle v, \xi \rangle,
%$$
%and
%$$
%0 = \langle \nu, \xi \rangle = \alpha |\xi|^2 + \langle v, \xi \rangle =
%\alpha + \langle v, \xi \rangle.
%$$
%Thus,
On the other hand, being~$\eta$ and~$\xi$ orthogonal, by Pythagoras' theorem we have that
\begin{equation} \label{vest}
|v|^2 = |\eta|^2 + \alpha^2 |\xi|^2 = 1 + \alpha^2 \ge 1.
\end{equation}
Therefore, by~\eqref{ii},~\eqref{geomell} and~\eqref{vest} we are able
to conclude that
$$
H_{ij}(\xi) \eta_i \eta_j = H_{ij}(\xi) (\alpha \xi_i + v_i) (\alpha \xi_j + v_j)
= H_{ij}(\xi) v_i v_j \ge \tilde{c} |v|^2 \ge \tilde{c},
$$
which is~\eqref{END2}.

Now we point out that
\bequ \label{X90}
\begin{split}
\left[ \Hess \,(B \circ H)(\xi) \right]_{i j} \zeta_i \zeta_j & =
\left[ B''(H(\xi)) H_i(\xi) H_j(\xi) + B'(H(\xi)) H_{i j}(\xi) \right]\zeta_i\zeta_j\\
& = (p-1)(H(\xi))^{p-2} \Big( H_i(\xi)\zeta_i\Big)^2
+(H(\xi))^{p-1} H_{i j}(\xi)\zeta_i\zeta_j.
\end{split}
\equ
Moreover, by homogeneity, $H(\xi)\in [c_1 |\xi|, C_1|\xi|]$,
$|\nabla H(\xi)|\le C_1$, and $|H_{ij}(\xi)|\le C_1/|\xi|$,
for suitable $C_1\ge c_1>0$. 
Therefore
\begin{equation}\label{RT1}\begin{split}
\left[ \Hess \,(B \circ H)(\xi) \right]_{i j} \zeta_i \zeta_j
\le 
(p-1)C_1^{p}|\xi|^{p-2}|\zeta|^2
+C_1^p |\xi|^{p-2}|\zeta|^2
.\end{split}\end{equation}
Now we claim that
\begin{equation}\label{RT2}\begin{split}
\left[ \Hess \,(B \circ H)(\xi) \right]_{i j} \zeta_i \zeta_j
\ge c_\star |\xi|^{p-2}|\zeta|^2,
\end{split}\end{equation}
for some~$c_\star>0$.
To prove it, we observe that~$B\circ H$ is homogeneous of degree~$p$,
hence~$\Hess \,(B \circ H)$
is homogeneous of degree~$p-2$, so without loss of generality we may assume~$|\xi|=1$.
Also, we write~$\zeta=\alpha\xi+w$, with~$\alpha\in\R$ and~$w\in\xi^\perp$, so that~$|\zeta|^2=\alpha^2+|w|^2$.
We observe that
\begin{equation}\label{L77}
(H(\xi))^{p-1} H_{i j}(\xi)\zeta_i\zeta_j=(H(\xi))^{p-1} H_{i j}(\xi)w_i w_j
\ge c_1^{p-1} c\,|w|^2,\end{equation}
due to point~$(ii)$ in Lemma~\ref{homain} and~\eqref{END}.
Now, we distinguish two cases. If
$$
|w| \ge c_1 |\zeta|/(2 C_1 + 2 c_1),
$$
then we use~\eqref{X90} and~\eqref{L77},
to obtain
\begin{eqnarray*}
&& \left[ \Hess \,(B \circ H)(\xi) \right]_{i j} \zeta_i \zeta_j\ge 
(H(\xi))^{p-1} H_{i j}(\xi)\zeta_i\zeta_j\ge
\frac{ c_1^{p + 1} c}{4 (C_1 + c_1)^2} \,|\zeta|^2.
\end{eqnarray*}
This proves~\eqref{RT2} in this case. On the other hand, if
$$
|w|< c_1 |\zeta|/(2 C_1 + 2 c_1)\le \min\{ c_1 |\zeta| / (2C_1), \, |\zeta| / 2 \},
$$
we recall point~$(i)$ in Lemma~\ref{homain} and we see that
\begin{align*}         
|H_i(\xi)\zeta_i| & \ge |\alpha H_i(\xi)\xi_i|-|H_i(\xi)w_i|\ge |\alpha H(\xi)|-C_1|w| \ge c_1|\alpha|-C_1|w| \\
& = c_1\sqrt{|\zeta|^2-|w|^2}-C_1|w|\ge c_1 \sqrt{\frac34}\,|\zeta|- c_1 \frac{|\zeta|}{2} = \frac{c_1 (\sqrt{3}-1)}{2} \, |\zeta|.
\end{align*}     
Thus, by~\eqref{X90} and~\eqref{L77},
\begin{align*}
\left[ \Hess \,(B \circ H)(\xi) \right]_{i j} \zeta_i \zeta_j & \ge
(p-1)(H(\xi))^{p-2} \Big( H_i(\xi)\zeta_i\Big)^2\\
& \ge \frac{(p-1) c_1^p (\sqrt{3}-1)^2}{4} \, |\zeta|^2.
\end{align*}
This completes the proof of~\eqref{RT2}.
Then \eqref{RT1} and \eqref{RT2}
establish~\eqref{scope} in this case.

\section{A characterization of~$H$ under assumption~(B)} \label{Hchar}

We present here an exhaustive characterization of the functions~$H$
that satisfy~(B). Indeed, we prove that in this case~$H$ is as in~\eqref{HA}.

Let~$H$ and~$B$ be as in hypothesis~(B).
Notice that, as showed in Section~\ref{S:AUX2}, we have that~$B \in C^2([0, +\infty))$, with
\bequ \label{B''pos}
B''(0) > 0.
\equ
By Lemma~\ref{derBH0}, we know that
$$
\partial_i (B \circ H)(\xi) = 
\begin{cases}
B'(H(\xi)) H_i(\xi) & \mbox{ if } \xi \ne 0 \\
0 & \mbox{ if } \xi = 0.
\end{cases}
$$
Thus, we can proceed to compute the second partial
derivatives of~$B \circ H$ at the origin. Writing as~$e_j$ the~$j$-th
component of the standard basis of~$\R^n$, we get
\bequ \label{d2BH0}
\begin{split}
\partial_{i j}^2 (B \circ H)(0) & =
\lim_{t \rightarrow 0} \frac{B'(H(t e_j)) H_i(t e_j) - 0}{t} =
\lim_{t \rightarrow 0^{\pm}} \frac{B'(|t| H(\pm e_j)) H_i(\pm e_j)}{t} \\
& = \pm \lim_{t \rightarrow 0^{\pm}} \frac{B'(|t| H(\pm e_j))}{|t| H(\pm e_j)} H_i(\pm e_j) H(\pm e_j)
= \pm B''(0) H_i(\pm e_j) H(\pm e_j). 
\end{split}
\equ
Therefore, recalling~\eqref{B''pos}, we may conclude that
the limit exists if and only if
$$
H_i(e_j) H(e_j) = - H_i(-e_j) H(-e_j),
\qquad \mbox{for any } i, j \in \{ 1, \ldots, n \},
$$
or, equivalently,
\bequ \label{oddH2i}
\partial_i \left( H^2 \right) (e_j) =
- \partial_i \left( H^2 \right) (-e_j), \qquad \mbox{for any } i, j \in \{ 1, \ldots, n \}.
\equ
Knowing this, we can check the continuity of the derivatives at the origin.
Since
$$
\partial_{i j}^2 (B \circ H)(\xi) = B''(H(\xi)) H_i(\xi) H_j(\xi) + B'(H(\xi)) H_{i j}(\xi),
$$
for any~$\xi \ne 0$, we compute
\begin{equation*}
\begin{split}
\lim_{\xi \rightarrow 0} \partial_{i j}^2 (B \circ H)(\xi) & =
\lim_{\xi \rightarrow 0} \left[ B''(H(\xi)) - \frac{B'(H(\xi))}{H(\xi)} \right] H_i(\xi) H_j(\xi) \\
& \quad + \lim_{\xi \rightarrow 0} \frac{B'(H(\xi))}{H(\xi)} \big[ H_i(\xi) H_j(\xi) + H(\xi) H_{i j}(\xi) \big] \\
& =: L_1 + L_2.
\end{split}
\end{equation*}
We observe that~$L_1 = 0$, since~$H_i H_j$ is homogeneous of degree~$0$,
and thus bounded, and~$B$ is of class~$C^2$ at the origin. Therefore we get
$$
\lim_{\xi \rightarrow 0} \partial_{i j}^2 (B \circ H)(\xi) = L_2 =
B''(0) \lim_{\xi \rightarrow 0} \big[ H_i(\xi) H_j(\xi) + H(\xi) H_{i j}(\xi) \big],
$$
so that, recalling~\eqref{d2BH0}, the continuity of the second derivatives is equivalent to
$$
\lim_{\xi \rightarrow 0} \big[ H_i(\xi) H_j(\xi) + H(\xi) H_{i j}(\xi) \big] = H_i(e_j) H(e_j).
$$
Rewriting last identity as
\bequ \label{d2=d1}
\lim_{\xi \rightarrow 0} \partial_{i j}^2 \left( \frac{H^2}{2} \right) (\xi)
= \partial_i \left( \frac{H^2}{2} \right) (e_j),
\equ
we notice that, since~$\partial_{i j}^2 \left( H^2 / 2 \right)$
is a homogeneous function of degree~$0$, by~\eqref{d2=d1} it has limit
at the origin and so it is necessarily constant. This means that~$H^2$ is
a polynomial of degree~$2$ and thus
$$
H(\xi) = H_M(\xi) := \sqrt{\langle M \xi, \xi \rangle}, \qquad \mbox{for any } \xi \in \R^n,
$$
with~$M \in \mbox{\normalfont Mat}_n(\R)$ symmetric and positive definite. The function~$H_M$ thus defined is clearly positive homogeneous of degree~$1$ and it satisfies~\eqref{oddH2i}, since it is even.

\section{The Wulff shape: a physical interpretation} \label{wulshaapp}

The convex anisotropy~$H$ we dealt with all along the present work is widely considered in the literature. A particular set is typically related to it: the Wulff shape. Considering the dual function~$H^*$ of~$H$, defined by setting
$$
H^*(x) := \sup_{|\xi| = 1} \frac{\langle \xi, x \rangle}{H(\xi)}, \qquad \mbox{for all } x \in \R^n,
$$
the Wulff shape~$W_H$ of~$H$ is the $1$-sublevel set of~$H^*$, that is
\begin{equation} \label{wulshadef}
W_H := \left\{ x \in \R^n : H^*(x) \le 1 \right\}.
\end{equation}
As shown by the classical Wulff theorem~(see e.g. Theorem~$1.1$ in~\cite{T78}),~$W_H$ is the set which minimizes the anisotropic interfacial energy
$$
\Omega \longmapsto \int_{\partial \Omega} H(\nu(x)) \, d\mathcal{H}^{n - 1}(x),
$$
between all sets~$\Omega$ having the same prescribed volume.

This property is frequently used, for instance, to deduce the equilibrium shape of a crystal, due to the anisotropic nature of the forces there involved. A less common application is described in~\cite[Section~$2$]{T78}, where the author addresses the problem of determining the closed path a trawler should follow in order to enclose a fixed amount of fish in the shortest time. Assuming the fish to be uniformly distributed in the sea and denoting by~$H(\xi)$ the time the sailboat employs to travel, say, one mile in direction~$\xi \in \partial B_1$, it is proved that the optimal path is given by following the frontier of a suitable dilation of the Wulff shape of~$H$.

Next we present another physical interpretation of the Wulff shape, which arises quite naturally in a dynamical model related to our framework. Consider equations~\eqref{eleq} and~\eqref{ODE} in the case~$B(t) = t^2 / 2$, with no forcing terms (i.e. when~$F:=0$) and take the corresponding hyperbolic evolutionary equation
\begin{equation} \label{W.1}
u_{tt} = {\rm div}\,\big( H(\nabla u) \nabla H(\nabla u)\big).
\end{equation}
Notice that~\eqref{W.1} is the classical wave equation when~$H(\xi):=|\xi|$. Then, define~$u^\omega$ to be a one-dimensional travelling wave of velocity~$c_\omega > 0$, that is
$$
u^\omega(x, t) := u_0(\omega \cdot x - c_\omega t),
$$
with~$u_0$ smooth and increasing for simplicity. Using the homogeneity properties of~$H$ (e.g.~\eqref{i} and~\eqref{ii}), we see that, if~$u_0$ is not affine, then it is a solution of~\eqref{W.1} if and only if
$$
c_\omega = H(\omega).
$$
%Notice that in the model case~$B(t):=|t|^2/2$ this reduces to
%\begin{equation}
%c_\omega = H(\omega),
%\end{equation}
%and, in general, formula~\eqref{W.1} represents a wave equation with different propagation speeds in different directions.

In this setting, the points reached by the plane wave~$u^\omega$ in a unit of time form exactly the set~$\{ x \in \R^n : \omega \cdot x \le H(\omega) \}$. By taking all the possible directions~$\omega \in \partial B_1$ we obtain
$$
\bigcap_{\omega \in \partial B_1} \{ x\in\R^n : \omega \cdot x \le H(\omega) \},
$$
which is the Wulff shape of the velocity function~$\omega \mapsto c_\omega = H(\omega)$, as one can easily check recalling definition~\eqref{wulshadef}.
%as showed for instance by the first formula on page~573 of~\cite{T78}).

\section*{Acknowledgements}

We thank Guglielmo Albanese and Simona Scoleri for several interesting conversations.
This work
was supported by the
ERC Grant~$\epsilon$ ({\it Elliptic Pde's and Symmetry of Interfaces and Layers
for Odd Nonlinearities}).

\end{document}